\def\BState{\State\hskip-\ALG@thistlm}
\numberwithin{equation}{section} \makeatletter
\renewcommand{\subsection}{\@startsection
	{subsection}{2}{0mm}{\baselineskip}{-0.25cm}
	{\normalfont\normalsize\bf}} \makeatother
\newtheorem{theorem}{Theorem}[section]
\newtheorem{lemma}[theorem]{Lemma}
\newtheorem{proposition}[theorem]{Proposition}
\newtheorem{ass}[theorem]{Assumption}
\newtheorem{algo}[theorem]{Algorithm}
\theoremstyle{remark}
\newtheorem{remark}[theorem]{Remark}
\newtheorem{example}[theorem]{Example}
\def \F {\mathcal F}
\def \L {\mathcal L}
\def \S {\mathcal S}
\def \P {\mathbf P}
\def \I {{\mathbf 1}}
\def \R {\mathbb R}
\def \bF {\mathbb F}
\def \bE {\mathbb E}
\def \bN {\mathbb N}
\newcommand{\ud}{\mathrm d}
\newcommand{\var}{\operatorname{var}}
\def\thickhline{%
	\noalign{\ifnum0=`}\fi\hrule \@height \thickarrayrulewidth \futurelet
	\reserved@a\@xthickhline}
\def\@xthickhline{\ifx\reserved@a\thickhline
	\vskip\doublerulesep
	\vskip-\thickarrayrulewidth
	\fi
	\ifnum0=`{\fi}}
\newlength{\thickarrayrulewidth}
\begin{document}

	\author[R.~Frey]{R\"udiger Frey}\address{R\"udiger Frey, Institute for Statistics and Mathematics, Vienna University of Economics and Business, Welthandelsplatz, 1, 1020 Vienna, Austria }\email{rfrey@wu.ac.at}
	
	\author[V.~K\"ock]{Verena K\"ock}\address{Verena K\"ock, Institute for Statistics and Mathematics, Vienna University of Economics and Business, Welthandelsplatz, 1, 1020 Vienna, Austria }\email{verena.koeck@wu.ac.at}

	\title[Convergence of the Deep Splitting Algorithm for PIDEs]{Convergence Analysis Analysis of the Deep Splitting Scheme: the Case of Partial Integro Differential Equations and the associated FBSDEs with  Jumps}

	\date{version from \today}
	
\begin{abstract}

High-dimensional parabolic partial integro-differential equations (PIDEs) appear in many applications in  insurance and  finance. Existing numerical methods suffer from the curse of dimensionality or provide solutions only for a given  space-time point.  This gave rise  to a  growing literature on   deep learning based methods for solving partial differential equations; results for integro-differential equations on the other hand are scarce. In this paper we consider an extension of the deep splitting scheme due to \citet{beck2021deep} and \citet{germain2022approximation} to PIDEs. Our main contribution is a convergence analysis of the scheme.  Moreover we discuss several test case studies to show the viability of our approach.
		
\end{abstract}

	\maketitle

	{\bf Keywords}:  Parabolic  partial integro-differential equations;  Forward backward equations with jumps; Deep neural networks and machine learning; Error estimates for numerical schemes;

\section{Introduction}

In this paper we study  deep-learning based numerical schemes for   solving   the following system of forward and backward stochastic differential equations with jumps (FBSDEJ) with unknown variables $(\mathcal X_t,Y_t,Z_t,U_t)_{0 \le t \le T}$:
\begin{equation} \label{eq:FBSDE}
\begin{cases}
\mathcal X_t&=\mathcal{ X}_0+\int_{0}^{t} b(\mathcal X_s) \, \ud s + \int_{0}^{t} \sigma(\mathcal X_{s-})  \, \ud W_s + \int_{0}^{t} \int_{\R^d} \gamma^X(\mathcal X_{s-},z) \,  \widetilde{J}(\ud s, \ud z) \, ,  \\
Y_t&=g(\mathcal X_T)-\int_{t}^{T} f(\Theta_s)  \, \ud s - \int_{t}^{T} Z_s  \, \ud W_s - \int_{t}^{T} \int_{\R^d} U_s(z)  \,  \widetilde{J}(\ud s, \ud z) \, ,  \\
\Gamma_t&=\int_{\R^d} U_t(z) \rho(z) \,  \nu (\ud z) \, .
\end{cases}
\end{equation}
Here  $\mathcal{X}$ is a $d$-dimensional process with  dynamics given in terms of functions $b \colon \R^d \to \R^d$, $\sigma \colon \R^d \to  \R^{d \times d}$, $\gamma^X(x,z) \colon \R^d \times \R^d \to \R^d$ for a $d$-dimensional Brownian motion $W$ and  an independent compensated Poisson measure $\widetilde J(\ud t, \ud z)=J(\ud t, \ud z)- \nu(\ud z) \, \ud t$ for a finite measure $\nu $ on $\R^d$. Moreover,  $\rho \colon \R^d \to \R$ is a bounded measurable function and  $\Theta_s=(s,\mathcal X_s,Y_s,Z_s,\Gamma_s)$.  The (generally nonlinear) function $f \colon [0,T]\times \R^d \times \R \times \R^d \times \R \to \R$ is the driver of the backward equation in \eqref{eq:FBSDE}, and $g \colon \R^d \to\R$ gives the terminal condition.

FBSDEJ systems of the form   \eqref{eq:FBSDE} arise in hedging or utility maximization problems in mathematical finance; see for instance  \citet{eyraud2005backward}  or  \citet{becherer2006bounded}.
A further important  motivation for studying numerical methods for the system~\eqref{eq:FBSDE} is that  under fairly weak  conditions the FBSDEJ provides an alternative representation for a large class of partial integro-differential equations (PIDEs) of the form
\begin{align}\label{eq:PIDE}
	\begin{cases}
	u_t(t,x)+\L u (t,x)  = f\big(t,x,u(t,x),\sigma(x)^\top D_x u(t,x),\mathcal{I}[u](t,x)\big)    & \text{on } [0,T) \times \R^d \, , \\
	u(T,x)=g(x) &    \text{on } \R^d  \, .
	\end{cases}
\end{align}
Here   $D_x u$  is the gradient of $u$ with respect to the space variable,  and
\begin{align}
\L u(t,x) &:=  b(x) \cdot D_x u (t,x)  +\frac{1}{2} \sum_{i,j=1}^{d} (\sigma \sigma^\top)_{ij}(x)  u_{x_i x_j}(t,x) \label{eq:sec1_generatorX}\\ & \qquad + \int_{\R^d} \big [u(t,x+\gamma^X(x,z))-u(t,x)- D_x u (t,x) \cdot \gamma^X(x,z)\big ]  \,  \nu(\ud z) \, , \\
\mathcal{I}[u](t,x)&:=\int_{\R^d} \big [ u(t,x+\gamma^X(x,z))-u(t,x)\big]\rho(z)  \,  \nu(\ud z) \,. \label{eq:sec1_generatorI}
\end{align}
PIDEs of the form~\eqref{eq:PIDE} appear naturally in many  finance and insurance related problems and in certain stochastic control problems for jump diffusions. 

Existing numerical  methods for PIDEs typically suffer  from the curse of dimensionality, as it is the case for finite difference and finite element methods, or they  only provide a solution for a single fixed time-space point $(t,x)$, as it is the case for schemes based on Monte-Carlo methods.
Starting with the seminal papers  \citet{han2017overcoming} and  \citet{bib:e-han-jentzen-17}, this has led to a large  literature on   machine learning based numerical methods  for parabolic  PDEs without  non-local terms.    Many approaches are  based on the  FBSDE representation of the equations.
To begin with, \citet{hure2020deep}
estimate the solution and its gradient simultaneously by backward induction through sequential minimizations of suitable loss functions; moreover, they  provide convergence results for their method. The paper of \citet{beck2021deep} uses a different discretization method, called \emph{deep splitting} or \emph{DS},  that computes the unknown gradient of the solution by automatic differentiation, which  reduces  the complexity  of the  network approximation. For linear PDEs this method simplifies   to the  global regression approach of \citet{beck2021solving}.
\citet{pham2021neural} combine the ideas of  \cite{hure2020deep}  and \cite{beck2021deep} to introduce a neural network scheme for fully nonlinear PDEs. Finally, \citet{germain2022approximation} extend the method in \cite{hure2020deep} and  they provide a convergence analysis that covers many schemes for semilinear PDEs including.

Work on  deep learning methods for PIDEs or FBSDEs with jumps (FBSDEJs) on the other hand is scarce. In the present paper we study therefore an extension of the deep splitting scheme for FBSDEJs and PIDEs. Our main mathematical result is a detailed analysis of the convergence properties of the scheme. Our proof follows the approach  of \cite{germain2022approximation} but some important changes have to be incorporated due to the non-local character of the generator \eqref{eq:sec1_generatorX} and the integral term  in the driver $f$. We moreover consider  the special  case where the driver is of the form $f = f(t,x)$ (linear PIDEs); in that case tighter bounds on the approximation error can be given.
Finally, we present several numerical case studies   to show the viability of the approach.
Further examples, extensions to boundary value problems and details of the algorithmic implementation are provided in the companion paper \citet{frey2021deep}.

We continue with a brief discussion of the existing contributions on deep learning for PIDEs.
\citet{castro2021deep}  extends the method of \cite{hure2020deep} to FBSDEJs and proves convergence of the generalized scheme.  From a numerical viewpoint his  method  is   quite  involved, since one  needs to approximate the solution $Y$ and the  the integrands $Z$ and $U_t(\cdot)$ in \eqref{eq:FBSDE} by three separate networks. \citet{bib:gonon2021deep} propose deep learning based methods for linear PIDEs. Very recently  \citet{bib:boussange2022deep} proposed an extension of the deep splitting schemes to PIDEs of the form \eqref{eq:PIDE} with Neumann boundary condition. However, they do not study  the convergence properties of their scheme, and they assume that the driver  $f$ is independent of $D_x u$, which excludes many relevant control problems. Finally, \citet{bib:neufeld2022multilevel} consider multilevel Picard approximation for semilinear PIDEs and they provide a complexity analysis for their algorithm  (again in the case where $f$ does not depend on $D_x u$).

The paper is structured as follows. In Section \ref{sec:theFBSystemJ} we describe the problem framework.  In Section~\ref{sec:algo} we summarize the work of \citet{bouchard2008discrete} on the Euler scheme for FBSDEs and we introduce the FBSDEJ-version of the  deep splitting  algorithm. The error bound for the DS  algorithm is discussed in Sections \ref{sec:errorbound} and \ref{sec:proof}. In the final Section \ref{sec:numerical} we present several numerical case studies.

\section{The Forward Backward System with jumps} \label{sec:theFBSystemJ}

\subsection{Notation.}

We fix a probability space $(\Omega,\F, \P)$, a time horizon $T$ and a right continuous  filtration $\bF$.
  Let $|x|_2$ denote the Euclidian norm for $x \in \R^d$,   $\|X\|_{2p}:=(\bE|X|_2^{2p})^{\frac{1}{2p}}$, $p \ge 1$ the $L^{2p}$-norm for a random vector $X \in \R^d$ and $|A|_2:=\sup \{|Ax|_2: x \in \R^d, \, |x|_2=1 \}$ for $A \in \R^{d\times d}$.  For sake of simplicity for random vectors $Z_1, Z_2 \in \R^d$, we use the convention
$\mathbb{E}|Z_1-Z_2|_2^2:=\mathbb{E}[|Z_1-Z_2|_2^2] \, .$ Given a time point  $t_i \in [0,T]$ (usually an element of a time grid) the operator $\mathbb{E}_i$ will denote the conditional expectation with respect to $\mathcal{F}_{t_i}$, that is  for a generic  $X \in L^1(\Omega,\F, \P)$ we let
$\mathbb{E}_i[X]:=\mathbb{E}[X|\mathcal{F}_{t_i}]$. Moreover, we denote by $\var_i$ the conditional variance with respect to $\mathcal{F}_{t_i}$, that is for $X\in L^2(\Omega,\F, \P)$,
\begin{equation}\label{eq:cond-var}
\var_i(X) =  \mathbb{E}_i \,\big | X - \mathbb{E}_i[X] \, \big |^2  = \mathbb{E}_i[X^2] - \mathbb{E}_i[X]^2\,.
\end{equation}

Given $s\le t$ and $p \ge 2$ we introduce  the following  spaces of stochastic processes (see also \cite{bouchard2008discrete}). First,
$\S^p_{[s,t]}$ is the set of all adapted c\`adl\`ag processes $Y$ such that
$\|Y\|_{\S^p_{[s,t]}} := \mathbb E \big[\sup_{t \in [s,t]} |Y_t|^p \big]^{1/p} < \infty $. Second,
$L^p_{W,[s,t]}$ is the set of all progressively measurable $\R^d$-valued   processes $Z$ with
$$ \|Z\|_{L^p_{W,[s,t]}} := \mathbb E \Big[\Big( \int_{s}^t |Z_t|_2^2  \,  \ud t \Big)^{p/2} \Big]^{1/p} < \infty.$$
Third, denote by $\mathcal{P}$ the $\sigma$-algebra of $\mathbb{F}$-predictable subsets of $ \Omega \times [0,T]$. Then
$L^p_{\nu,[s,t]}$ is the set of all maps $U: \Omega \times [0,T] \times \R^d \to \R$ that are $\mathcal{ P} \otimes \mathcal{B}(\R^d)$ measurable with
$$\|U\|_{L^p_{\nu, [s,t]}} := \mathbb E \Big[\int_{s}^t \int_{\R^d} |U_t(y)|^p \, \nu (\ud y)  \,   \ud t \Big]^{1/p} < \infty \, .$$
The space $\mathcal{B}^p_{[s,t]}:=\S^p_{[s,t]}\times L^p_{W,[s,t]} \times  	L^p_{\nu,[s,t]}$ is finally endowed with the norm
$$ \|(Y,Z,U)\|_{\mathcal{B}^p_{[s,t]}} =\Big(\|Y\|^p_{\S^p_{[s,t]}}  +  \|Z\|^p_{L^p_{W,[s,t]}} + \|U\|^p_{L^p_{\nu, [s,t]}} \Big)^{1/p} \, .$$
Whenever $(s, t) = (0, T )$ we omit the subscript $[s, t]$ in these notations.


\subsection{The  FBSDEJ}

We assume that $(\Omega,\F, \P)$  supports  a $d$-dimensional Brownian motion $W$   and a Poisson random measure $J(\ud t, \ud z)$ on $[0,T]\times \R^d$. The compensator of $J$ is given by  $\nu(\ud z)  \,  \ud t$ for a finite measure $\nu$ on $\R^d$ with $\nu(\{0\})=0$. The compensated measure is denoted as
$$\widetilde J(\ud t, \ud z)=J(\ud t, \ud z)- \nu(\ud z)   \,  \ud t \, ,$$
such that for every measurable set  $A \in \R^d$ the process $\widetilde{J}([0,t],A)$, $t \ge 0$, is a martingale.

Consider measurable functions $\mu: [0,T] \times \R^d \to \R^d $, $\sigma: [0,T] \times \R^d \to \R^{d\times d}$,  $\gamma^X: [0,T] \times \R^d \times \R^d \to \R^d$ and a bounded measurable function $\rho \colon \R^d \to \R$. In this paper we consider the following FBSDE system with jumps.
\begin{align}
\mathcal X_t&=\mathcal{ X}_0+\int_{0}^{t} b(\mathcal X_s)  \, \ud s + \int_{0}^{t} \sigma(\mathcal X_{s-})  \, \ud W_s + \int_{0}^{t} \int_{\R^d} \gamma^X(\mathcal X_{s-},z) \widetilde{J}(\ud s, \ud z) \, ,  \label{eq:FBSDEJ_X}\\
Y_t&=g(\mathcal X_T)-\int_{t}^{T} f(\Theta_s)  \, \ud s - \int_{t}^{T} Z_s \,  \ud W_s - \int_{t}^{T} \int_{\R^d} U_s(z)  \, \widetilde{J}(\ud s, \ud z) \, ,  \label{eq:FBSDEJ_Y} \\
\Gamma_t&=\int_{\R^d} U_t(z) \rho(z)  \, \nu (\ud z)  \label{eq:FBSDEJ_Gamma}\,.
\end{align}

Throughout  we  make the following assumption on the coefficients of this  FBSDEJ system.

\begin{ass} \label{ass:conditions}   There exists a universal constant $K > 0 $ such that
 \begin{enumerate}
 	\item The functions $b: \R^d \to \R$, $\sigma: \R^d \to \R^{d \times d }$ are $K $-Lipschitz continuous.
 	\item The map $\gamma^X: \R^d \times \R^d \to \R^d$ is measurable, uniformly bounded and uniformly $K $-Lipschitz, i.e.  $\sup_{z \in \R^d} |\gamma^X(0,z)|\leq K$
	and $\sup_{z \in \R^d} |\gamma^X(x,z)-\gamma^X(x',z)| \leq K |x-x'|, \forall x,x' \in \R^d$
 	\item For each $t,t',y,y',w,w' \in \R$ and $x,x',z,z' \in \R^d$ the map $f \colon [0,T]\times \R^d \times \R \times \R^d \times \R \to \R$ is $[f]_L $-Lipschitz with $[f]_L<K$, that is
 	$|f(t,x,y,z,w)-f(t',x',y',z',w')| \le [f]_L\big(|t-t'|^{1/2}+|x-x'|_2+|y-y'|+|z-z'|_2+|w-w'|\big)$.
 	\item The function $g: \R^d \to \R$ is $K$-Lipschitz continuous.
 	\item For each $z \in \R^d$, the map $x \mapsto \gamma^X(x,z)$ admits a Jacobian matrix $D_x \gamma^X(x,z)$ such that the function $a(\cdot \, ;z)$,  $(x,\xi) \in \R^d \times \R^d \mapsto a(x,\xi;z)=\xi^\top (D_x \gamma^X(x,z)+I_d) \xi$ satisfies either  $a(x,\xi;z) \geq |\xi|^2 {K}^{- 1}$ for all  $(x,\xi) \in \R^d \times \R^d$ or $a(x,\xi;z)\leq -|\xi|^2 {K}^{-1}$ for all  $(x,\xi) \in \R^d \times \R^d$.
 \end{enumerate}
\end{ass}
The existence and uniqueness of a solution $\mathcal{X}$ to \eqref{eq:FBSDEJ_X} is guaranteed under conditions (1)-(2), and in order to ensure existence and uniqueness of a solution $Y$ to \eqref{eq:FBSDEJ_Y} we have to assume (3)-(4), see  \citet{tang1994necessary} for details. Condition~(5) on the other hand is more technical and implies that the matrix $D_x \gamma^X(x,z)+I_d$ is invertible with inverse bounded by $K^{\, {-}1}$. 
\citet{bouchard2008discrete} show that (5) is required to ensure the proper convergence order of the Euler scheme for \eqref{eq:FBSDEJ_Y}, see   Section \ref{sec:disc} below.

The standard estimates for solutions of FBSDEJs (see for instance \cite{bouchard2008discrete}) imply that under Assumption~\ref{ass:conditions}~(1)--(4), \marginpar{check} there is some $C>0$ such that
\begin{align*}
\|(\mathcal X,Y,Z,U)\|^p_{\S^p \times \mathcal{B}^p} &\leq C (1+\bE|\mathcal{ X}_0|_2^p) \, , \\
\mathbb{E} \Big[\sup_{s\le u \leq t}|\mathcal X_u-\mathcal X_s|_2^p\Big] &\leq C(1+\bE|\mathcal{ X}_0|_2^p)|t-s| \, , \\
\mathbb{E} \Big[\sup_{s\le u \leq t}|Y_u-Y_s|^p\Big] &\leq C \Big [ (1+\bE|\mathcal{ X}_0|_2^p)|t-s|^p + \|Z\|^p_{L^p_{W,[s,t]}}+\|U\|^p_{L^p_{\nu,[s,t]}} \Big ] \, .
\end{align*}

Next we discuss the relation between the FBSDEJ system \eqref{eq:FBSDEJ_X}-\eqref{eq:FBSDEJ_Gamma} and the PIDE
\eqref{eq:PIDE}. Suppose that $u \in \mathcal{ C}^{1,2}([0,T],\R^d)$ is a classical solution  of \eqref{eq:PIDE}. Then we obtain by applying the It\^o's formula to $Y_t = u(t, \mathcal{X}_t) $ that the forward process $\mathcal{X}$ and the triple   $(Y_t,Z_t,U_t)$ with
\begin{align} \label{eq:classical-sol}
	Y_s=u(s,\mathcal X_s) \, , \quad Z_s=\sigma(\mathcal X_s)^\top D_x u(x,\mathcal X_s) \, , \quad U_s(z)= u(s,\mathcal X_s+\gamma^X(\mathcal  X_s,z))-u(s,\mathcal  X_s) \, .
\end{align}
solves the FBSDEJ system. More generally, it is well known (see e.g. \cite{barles1997backward}) that, under mild assumptions on the coefficients given in Assumptions~\ref{ass:conditions}, the (viscosity) solution $u$ can be related to the component $Y$ of the solution to  \eqref{eq:FBSDEJ_X}-\eqref{eq:FBSDEJ_Gamma} in terms of the unknown variables $(\mathcal X_t,Y_t,Z_t,U_t)$ in the sense that $Y_t=u(t,\mathcal X_t)$.


\section{The DS-algorithm} \label{sec:algo}

In this section  we introduce the deep splitting method of \citet{beck2021deep}. We follow  \citet{germain2022approximation}, who discuss  the algorithm in an FBSDE context.

\subsection{Discrete-time approximation of FBSDEJs} \label{sec:disc}
To motivate the algorithm  we recall the Euler approximation for FBSDEJ systems from \citet{bouchard2008discrete}.
Let $\pi$ be a (for simplicity)  constant partition $\{t_0=0<t_1<\dots<t_N=T\}$ with modulus $|\pi| := \Delta t := T/N$.
The Euler scheme for \eqref{eq:FBSDEJ_X} takes the form
\begin{align}\label{eq:Eulerscheme}
\begin{cases}
	X_0^\pi:=\mathcal{X}_0,  \\
	X_{t_{i+1}}^\pi := X^\pi_{t_i}+b(X^\pi_{t_i})\Delta t_{i}+\sigma(X^\pi_{t_i}) \Delta W_{i}+ \int_{t_i}^{t_{i+1}} \int_{\R^d} \gamma^X(X^\pi_{t_i},z)  \,  \widetilde{J}(\ud t,\ud z) \, .
	\end{cases}
\end{align}
where  $\Delta W_{i}=W_{t_{i+1}}-W_{t_i}$.
For convenience we will always write $X_i=X^\pi_{t_i}$ and use $X_i$ as approximation for $\mathcal{ X}_t$ for each $t$ in the interval $[t_i,t_{i+1})$.
We define the continuous component $X^c$ and the jump component $X^J$  as
\begin{align}
	 X^c_{i+1}:=X_{i}+b(X_{i})\Delta t_{}+\sigma(X_{i}) \Delta W_{i}, \quad X_{i+1}^J:=\int_{t_i}^{t_{i+1}} \int_{\R^d} \gamma^X(X_{i},z)  \,  \widetilde{J}(\ud t ,\ud z) \, .
\end{align}

\begin{lemma}[Euler scheme for $\mathcal{X}$] \label{lemma:forward-Euler}
There is  a constant $C>0$ such that
\begin{equation} \label{eq:Eulererror}
	\max_{i=1,2,\dots, N} \mathbb{E} \Big[ \sup_{t \in [t_i,t_{i+1})} |\mathcal{ X}_t-X_{i}|_2^2\Big] \le C |\pi| \,.
\end{equation}
The standard estimates for $X_i$ are
$
\|X_i\|_{2p} \le C(1+\|\mathcal{ X}_0\|_{2p})$ and $\|X_{i+1}-X_i\|_{2p} \le  C(1+\|\mathcal{ X}_0\|_{2p})\sqrt{\Delta t} \, .$
\end{lemma}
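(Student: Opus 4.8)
The plan is to follow the classical strong-convergence argument for the Euler--Maruyama scheme, adapted to the jump-diffusion setting as in \cite{bouchard2008discrete}. First I would establish the a priori moment bounds. Writing the Euler recursion in one step,
\begin{equation*}
X_{i+1}-X_i = b(X_i)\Delta t + \sigma(X_i)\Delta W_i + \int_{t_i}^{t_{i+1}}\int_{\R^d}\gamma^X(X_i,z)\,\widetilde J(\ud t,\ud z)\,,
\end{equation*}
one takes the $L^{2p}$-norm, applies the triangle inequality, and controls each term: the drift term by $C\Delta t(1+\|X_i\|_{2p})$ using the linear growth implied by Assumption~\ref{ass:conditions}(1); the Brownian term by the Burkholder--Davis--Gundy inequality, yielding $C\sqrt{\Delta t}(1+\|X_i\|_{2p})$; and the jump term by the BDG inequality for purely discontinuous martingales together with the bound $\sup_z|\gamma^X(x,z)|\le K(1+|x|)$ from Assumption~\ref{ass:conditions}(2) and the finiteness of $\nu$, again giving a bound of order $\sqrt{\Delta t}(1+\|X_i\|_{2p})$. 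This establishes the second stated estimate $\|X_{i+1}-X_i\|_{2p}\le C(1+\|\mathcal X_0\|_{2p})\sqrt{\Delta t}$ once the uniform moment bound $\|X_i\|_{2p}\le C(1+\|\mathcal X_0\|_{2p})$ is in hand; the latter follows from the one-step inequality $\|X_{i+1}\|_{2p}^2\le (1+C\Delta t)\|X_i\|_{2p}^2 + C\Delta t$ and a discrete Gronwall argument over the (at most $N$) steps, using $N\Delta t=T$.

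For the convergence estimate \eqref{eq:Eulererror} I would proceed by a standard decomposition. Fix $i$ and $t\in[t_i,t_{i+1})$ and split
\begin{equation*}
\mathcal X_t - X_i = \big(\mathcal X_t - \mathcal X_{t_i}\big) + \big(\mathcal X_{t_i}-X_i\big)\,.
\end{equation*}
The first term is controlled by the standard path-regularity estimate for the true SDE quoted in the text, $\bE[\sup_{t_i\le t<t_{i+1}}|\mathcal X_t-\mathcal X_{t_i}|_2^2]\le C(1+\bE|\mathcal X_0|_2^2)|\pi|$. For the second term one compares the continuous-time SDE with its Euler interpolant at the grid points: writing the difference of the integral equations, squaring, and using Lipschitz continuity of $b$, $\sigma$, $\gamma^X$ plus the It\^o/BDG isometries for the Brownian and compensated-jump integrals, one obtains a recursion of the form
\begin{equation*}
\bE\,\max_{j\le i+1}|\mathcal X_{t_j}-X_j|_2^2 \;\le\; C\Delta t \sum_{j\le i}\bE|\mathcal X_{t_j}-X_j|_2^2 \;+\; C|\pi|\,(1+\bE|\mathcal X_0|_2^2)\,,
\end{equation*}
where the residual term $C|\pi|$ collects the errors from freezing the coefficients over each subinterval, estimated via the path regularity of $\mathcal X$. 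Discrete Gronwall then gives $\max_j\bE|\mathcal X_{t_j}-X_j|_2^2\le C|\pi|$, and combining with the first term and taking the maximum over $i$ finishes \eqref{eq:Eulererror}.

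The main technical obstacle, compared with the pure-diffusion case, is the handling of the compensated Poisson integral in both the moment bounds and the error recursion: one must use the right martingale moment inequality for the jump integral (for $L^{2p}$ moments with $p\ge 1$ a Kunita-type or BDG inequality for jump integrals), and care is needed because $\gamma^X$ is evaluated at $X_{s-}$ versus the frozen $X_i$, so the local-martingale increments are not independent across subintervals in an entirely obvious way. However, since $\nu$ is a \emph{finite} measure and $\gamma^X$ is uniformly bounded in $z$ (Assumption~\ref{ass:conditions}(2)), the jump term behaves no worse than the Brownian term after compensation, and all estimates go through with the same orders in $\Delta t$. I would not reprove these inequalities from scratch but cite \cite{bouchard2008discrete}, where the analogous statement is established, and simply indicate the adaptation to the present notation.
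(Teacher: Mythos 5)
Your proposal is correct and follows the standard strong-convergence argument for the Euler--Maruyama scheme with jumps (one-step moment bounds via BDG-type inequalities for the Brownian and compensated-Poisson integrals, the decomposition $\mathcal X_t-X_i=(\mathcal X_t-\mathcal X_{t_i})+(\mathcal X_{t_i}-X_i)$, and a discrete Gronwall recursion). The paper itself offers no proof of this lemma --- it is recalled verbatim from \citet{bouchard2008discrete} --- and your sketch is precisely the argument established there, so the approaches coincide.
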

We use the \emph{explicit backward Euler scheme} (see e.g. \citet{bouchard2008discrete}) to approximate the backward variables of the FBSDEJ. Under this scheme the  triplet $(Y,Z,\Gamma)$   is approximated by processes $(\bar V, \bar Z,\bar \Gamma)$ that are defined by the following backward recursion. First,  $\bar V_N:=g({X}_N)$. Next, with $\Delta M_{i}= \int_{t_i}^{t_{i+1}}\int_{\R^d} \rho(z) \widetilde{J}(\ud s, \ud z)$ we have on each interval $[t_i,t_{i+1})$, $0 \le i \le N-1$
\begin{align} \label{eq:expl_euler_scheme1}
\bar V_i &:= \mathbb{E}_i \Big[ \bar V_{i+1} - f(t_i,X_{i},\bar{V}_{i+1},\bar Z_i, \bar \Gamma_i)\Delta t \Big] \, , \\
\bar Z_i&:= \mathbb{E}_i \Big[ \bar V_{i+1} \frac{\Delta W_{i}}{\Delta t} \Big] \, , \quad
\bar \Gamma_i := \mathbb{E}_i \Big[ \bar V_{i+1} \frac{\Delta M_{i}}{\Delta t} \Big]\,.  \label{eq:expl_euler_scheme2}
\end{align}
 We let $(\bar Y_t,\bar Z_t,\bar \Gamma_t):=(\bar Y_i,\bar Z_i,\bar \Gamma_i)$ for $t$ in the interval $[t_i,t_{i+1})$. The convergence properties of the backward Euler scheme are closely related to the $L^2$-\emph{regularity errors} $\epsilon^Z(\pi)$  and $\epsilon^\Gamma(\pi)$ of the solution  to the FBSDEJ.  These quantities are given by
\begin{align} \label{eq:L2error}
 \epsilon^Z(\pi)&:= ||Z-\widetilde{Z}||^2_{L^{2}_W}=  \sum_{i=1}^{N-1}  \bE \Big [ \int_{t_i}^{t_{i+1}}|Z_t-   \widetilde Z_{t_i}|_2^2  \,  \ud  t \Big ]  ,  \text{ where } \widetilde Z_{t}:=\frac{1}{\Delta t} \mathbb{E}_i\Big[\int_{t_i}^{t_{i+1}} Z_s  \,  \ud s \Big]  \text{ for }t \in [t_i,t_{i+1}) \, , \\
\epsilon^\Gamma(\pi)&:= ||\Gamma-\widetilde{\Gamma}||^2_{L^{2}_W}= \sum_{i=1}^{N-1} \bE \Big [ \int_{t_i}^{t_{i+1}}|\Gamma_t-   \widetilde \Gamma_{t_i}|^2  \,  \ud  t \Big ]   ,  \text{ where  } \widetilde \Gamma_{t}:=\frac{1}{\Delta t} \mathbb{E}_i\Big[\int_{t_i}^{t_{i+1}} \Gamma_s\, \ud s \Big] \text{ for }t \in [t_i,t_{i+1}) \, .
\end{align}

The following results on the  convergence order of the explicit backward Euler scheme from \citet{bouchard2008discrete} are a key tool in our analysis.
\begin{proposition} \label{prop:backward-Euler} Under Assumption~\ref{ass:conditions}  (1)--(4)  the approximation error of the backward Euler scheme satisfies
%
%
\begin{align}
\label{eq:time_dis_error}
\max_{i \in \{0,1,\dots,N\}} \sup_{t \in [t_i,t_{i+1}]} \mathbb{E} |Y_{t}-\bar V_i|^2&+\sum_{i=0}^{N-1}\mathbb{E} \Big[ \int_{t_i}^{t_{i+1}}|Z_s-\bar Z_i|_2^2 \, \ud s \Big ] +\sum_{i=0}^{N-1} \mathbb{E} \Big[ \int_{t_i}^{t_{i+1}}|\Gamma_s-\bar \Gamma_i|^2 \, \ud s \Big ] \\ &
\leq C \big( |\pi| + \epsilon^Z(\pi)+ \epsilon^\Gamma(\pi) \big) \, .
\end{align}
The $L^2$-regularity error $\epsilon^\Gamma(\pi)$ is of order  $O(|\pi|)$. If moreover Assumption~\ref{ass:conditions} (5) holds, $\epsilon^\Gamma(\pi)= O(|\pi|) $.
\end{proposition}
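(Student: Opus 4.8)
Since this is the time-discretisation theorem of \citet{bouchard2008discrete}, the plan is to reproduce their backward-in-time error analysis for the FBSDEJ while keeping track of the jump contributions. I would introduce the one-step error processes $\delta Y_i := \bar V_i - Y_{t_i}$, $\delta Z_i := \bar Z_i - \widetilde Z_{t_i}$ and $\delta\Gamma_i := \bar\Gamma_i - \widetilde\Gamma_{t_i}$, derive a coupled recursion for $\bE|\delta Y_i|^2$ together with a pointwise bound on $\bE|\delta Z_i|^2$ and $\bE|\delta\Gamma_i|^2$ in terms of a conditional variance, and close everything with a discrete Gronwall argument. The surviving source terms are $|\pi|$ — coming from the forward Euler estimate of Lemma~\ref{lemma:forward-Euler} and the $O(\sqrt{\Delta t})$ $L^2$-time-regularity of the solution of \eqref{eq:FBSDEJ_Y} — together with $\epsilon^Z(\pi)$ and $\epsilon^\Gamma(\pi)$; the $Z$- and $\Gamma$-error sums then fall out of the same computation.

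\textbf{Step 1: identifying the discrete integrands.} Writing \eqref{eq:FBSDEJ_Y} over $[t_i,t_{i+1}]$ as $Y_{t_{i+1}} = Y_{t_i} + \int_{t_i}^{t_{i+1}} f(\Theta_s)\,\ud s + \int_{t_i}^{t_{i+1}} Z_s\,\ud W_s + \int_{t_i}^{t_{i+1}}\!\int_{\R^d} U_s(z)\,\widetilde J(\ud s,\ud z)$, multiplying by $\Delta W_i/\Delta t$ resp.\ $\Delta M_i/\Delta t$ and taking $\mathbb{E}_i$, the It\^o isometries together with the orthogonality of $W$ and $\widetilde J$ give $\mathbb{E}_i[Y_{t_{i+1}}\Delta W_i/\Delta t] = \widetilde Z_{t_i} + R^Z_i$ and $\mathbb{E}_i[Y_{t_{i+1}}\Delta M_i/\Delta t] = \widetilde\Gamma_{t_i} + R^\Gamma_i$, where the remainders stem only from the $\int f\,\ud s$-term and satisfy $\bE|R^Z_i|^2 + \bE|R^\Gamma_i|^2 \le C\Delta t$ by Cauchy--Schwarz and the a priori bounds on $(\mathcal X,Y,Z,U)$ (for $R^\Gamma_i$ one also uses that $\rho$ is bounded and $\nu$ finite, so $\mathbb{E}_i|\Delta M_i|^2 \le C\Delta t$). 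Comparing with the scheme \eqref{eq:expl_euler_scheme2} and using that $\mathbb{E}_i[(\xi - \mathbb{E}_i\xi)\,\Delta W_i] = \mathbb{E}_i[\xi\,\Delta W_i]$ one obtains
\begin{align*}
\Delta t\,\bE|\delta Z_i|^2 \le C\,\bE\big[\var_i(\delta Y_{i+1})\big] + C\Delta t^2, \qquad \Delta t\,\bE|\delta\Gamma_i|^2 \le C\,\bE\big[\var_i(\delta Y_{i+1})\big] + C\Delta t^2 ,
\end{align*}
so the conditional variance $\var_i(\delta Y_{i+1})$ is the single quantity to be tamed.

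\textbf{Step 2: the recursion and Gronwall.} Subtracting \eqref{eq:expl_euler_scheme1} from the above representation of $Y_{t_{i+1}}$ and using $\bE|\mathbb{E}_i[\delta Y_{i+1}]|^2 = \bE|\delta Y_{i+1}|^2 - \bE[\var_i(\delta Y_{i+1})]$, I would bound the driver increment via the $[f]_L$-Lipschitz property against $|s-t_i|^{1/2} + |\mathcal X_s - X_i|_2 + |Y_s - \bar V_{i+1}| + |Z_s - \bar Z_i|_2 + |\Gamma_s - \bar\Gamma_i|$, split each off-grid term into an on-grid part (controlled by Lemma~\ref{lemma:forward-Euler} and the path-regularity estimate for $Y$, contributing $O(|\pi|)$ after summation) plus a scheme error, and write $|Z_s - \bar Z_i| \le |Z_s - \widetilde Z_{t_i}| + |\delta Z_i|$ and $|\Gamma_s - \bar\Gamma_i| \le |\Gamma_s - \widetilde\Gamma_{t_i}| + |\delta\Gamma_i|$, whose sums produce exactly $\epsilon^Z(\pi)$, $\epsilon^\Gamma(\pi)$ and, through Step~1, a multiple of $\sum_i\bE[\var_i(\delta Y_{i+1})]$. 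Choosing the Young parameter a large but fixed constant $\gamma$ (rather than $\gamma\Delta t$) makes the coefficient of $\sum_i\bE[\var_i(\delta Y_{i+1})]$ on the right strictly smaller than the coefficient $1$ with which it enters on the left, so it is absorbed; the surviving inequality reads $\bE|\delta Y_i|^2 \le (1+C\Delta t)\,\bE|\delta Y_{i+1}|^2 + C\Delta t\,\varrho_i$ with $\sum_i\Delta t\,\varrho_i \le C(|\pi| + \epsilon^Z(\pi) + \epsilon^\Gamma(\pi))$, together with $\sum_i\bE[\var_i(\delta Y_{i+1})] \le C(|\pi| + \epsilon^Z(\pi) + \epsilon^\Gamma(\pi))$. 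Since $\delta Y_N = g(X_N) - g(\mathcal X_T)$ and $g$ is $K$-Lipschitz, $\bE|\delta Y_N|^2 \le C|\pi|$ by Lemma~\ref{lemma:forward-Euler}; the discrete Gronwall lemma then yields the bound for $\max_i\sup_t\bE|Y_t - \bar V_i|^2$, and feeding the variance bound back into Step~1 and adding $\epsilon^Z(\pi)$, $\epsilon^\Gamma(\pi)$ gives the $Z$- and $\Gamma$-sums.

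\textbf{Step 3: the regularity errors, and the main obstacle.} It remains to show $\epsilon^Z(\pi) = O(|\pi|)$ under Assumption~\ref{ass:conditions}(1)--(4) and $\epsilon^\Gamma(\pi) = O(|\pi|)$ under the additional Assumption~\ref{ass:conditions}(5). For $Z$ I would use the Markovian representation $Z_s = \sigma(\mathcal X_s)^\top D_x u(s,\mathcal X_s)$ from \eqref{eq:classical-sol} — or, when $u$ is not smooth, a Malliavin/BSDE-regularity estimate of Zhang type adapted to jumps — to show that the $L^2$-modulus of continuity of $s\mapsto Z_s$ is $O(\sqrt{\Delta t})$, whence $\bE\int_{t_i}^{t_{i+1}}|Z_s - \widetilde Z_{t_i}|_2^2\,\ud s \le \bE\int_{t_i}^{t_{i+1}}|Z_s - Z_{t_i}|_2^2\,\ud s \le C\Delta t^2$ and summation gives $O(|\pi|)$. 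For $\Gamma$, writing $U_s(z) = u(s,\mathcal X_s + \gamma^X(\mathcal X_s,z)) - u(s,\mathcal X_s)$ and $\Gamma_s = \int_{\R^d} U_s(z)\rho(z)\,\nu(\ud z)$ with $\nu$ finite and $\rho$ bounded, the regularity of $\Gamma$ follows from that of $U_\cdot(z)$ \emph{uniformly in $z$}, and this uniformity is exactly what Assumption~\ref{ass:conditions}(5) provides: it forces $x\mapsto x + \gamma^X(x,z)$ to be a bi-Lipschitz bijection with $|(D_x\gamma^X(x,z)+I_d)^{-1}|_2 \le K$ for every $z$, so the $O(\sqrt{\Delta t})$ path-regularity of $s\mapsto u(s,\mathcal X_s)$ transfers to $s\mapsto U_s(z)$ with a $z$-independent constant, hence to $\Gamma$. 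I expect this last point to be the real obstacle — controlling the non-local term uniformly in the jump-size variable — which is precisely where the non-degeneracy condition (5) is genuinely used; it plays, for the $\Gamma$-regularity, the role that uniform ellipticity of $\sigma\sigma^\top$ plays for the classical $Z$-regularity. The only other delicate point is the bookkeeping in Step~2 (the fixed-$\gamma$ Young split and the telescoping of the variance terms), which is routine once Step~1 is in hand.
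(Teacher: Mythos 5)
The paper does not actually prove this proposition: it is imported verbatim from \citet{bouchard2008discrete} (Proposition~2.1 and Theorem~2.1 there), so the only material to compare against is the paper's surrounding Remark and the cited source. Your Steps~1 and~2 are a reasonable reconstruction of that error analysis --- the decomposition into $\delta Y,\delta Z,\delta\Gamma$, the conditional-variance bounds obtained by multiplying by $\Delta W_i/\Delta t$ and $\Delta M_i/\Delta t$, and the Young/Gronwall closure with a fixed large parameter to absorb $\sum_i\bE[\var_i(\delta Y_{i+1})]$ --- and they are consistent with how the estimate \eqref{eq:time_dis_error} is derived in the literature.

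Step~3, however, contains a genuine error: you have swapped the roles of $\epsilon^Z$ and $\epsilon^\Gamma$. The last sentence of the statement is garbled (it names $\epsilon^\Gamma$ twice), but the paper's own Remark immediately below resolves the typo in the opposite direction from your reading: under Assumption~(1)--(4) alone one has $\epsilon^\Gamma(\pi)\le Cn^{-1}$ but only $\epsilon^Z(\pi)\le Cn^{-1+\epsilon}$, and it is the additional Assumption~(5) that upgrades $\epsilon^Z$ to the full rate $O(|\pi|)$. Your claim that $\epsilon^Z(\pi)=O(|\pi|)$ already follows from (1)--(4) via a ``Zhang-type'' path-regularity estimate glosses over exactly the point where (5) enters in the jump setting: the Malliavin-calculus representation of $Z$ requires the first variation process of the forward jump SDE to be invertible, and that process contains a factor $I_d+D_x\gamma^X(X_{s-},z)$ at each jump time --- Assumption~(5) is precisely the uniform invertibility of these matrices. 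Conversely, the $\Gamma$-regularity does not need (5): since $U_s(z)=u(s,\mathcal X_s+\gamma^X(\mathcal X_s,z))-u(s,\mathcal X_s)$ with $u$ Lipschitz, $\gamma^X$ uniformly Lipschitz and bounded, and $\nu$ finite, the $L^2$-modulus of continuity of $s\mapsto\Gamma_s$ is already controlled by that of $\mathcal X$ under (1)--(4), giving $\epsilon^\Gamma(\pi)=O(|\pi|)$ without any non-degeneracy; your proposed mechanism (bi-Lipschitz invertibility of $x\mapsto x+\gamma^X(x,z)$ to obtain $z$-uniformity) is not what (5) is for. As written, Step~3 therefore asserts a rate for $\epsilon^Z$ that is not available under (1)--(4) and attaches hypothesis~(5) to the wrong error term.
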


\begin{remark}
	Note that Assumption~\ref{ass:conditions}  (1)--(4) ensure that $\epsilon^\Gamma(\pi)=||\Gamma-\widetilde{\Gamma}||^2_{L^{2}_W} \le Cn^{-1}$ and $\epsilon^Z(\pi)= ||Z-\widetilde{Z}||^2_{L^{2}_W}  \le Cn^{-1+\epsilon}$ for any $\epsilon>0$. Under the additional Assumption~\ref{ass:conditions} (5)  on the invertibility of $D_x \gamma^X(x,z)+I_d$ the previous inequality is  true  even for $\epsilon=0$, see Proposition~2.1 and Theorem~2.1 in \cite{bouchard2008discrete}.
\end{remark}

\subsection{The algorithm}

We fix a class $\mathcal{N}$ of $C^1$ functions  $\mathcal{U}:\R^d \to \R$ that are given in terms of neural networks with fixed structure.   The basic idea of the DS algorithm is to use the  $L^2$-minimality of conditional expectation  to rewrite the conditional expectation in \eqref{eq:expl_euler_scheme1} as a  regression problem where one projects at each $t_i$ on the set  of random variables $\mathcal{N}_{i} := \{ \mathcal{U}(X_{i}) \colon \mathcal{U} \in \mathcal{N} \}$ (and not on all of $ L^2(\Omega, \F_{t_i}, P)$). Moreover, motivated by \eqref{eq:classical-sol}, one  replaces $\bar Z_i$ by $   D_x \mathcal{\widehat U}_{i+1}(X_{i+1})$  and $ \bar \Gamma_i$ by $\mathcal{\widehat U}_{i+1}(X^c_{i+1}+\gamma^X(X_{{i}},z))-\mathcal{\widehat U}_{i+1}(X^c_{i+1})$. This leads to the following algorithm.

\begin{algo}[DS algorithm] \label{alg:1} Choose   a class $\mathcal{N}$ of $C^1$ network functions $\mathcal{U} \colon \R^d \to \R$.
 Then the  algorithm proceeds by backward induction  as follows.
\begin{enumerate}
	\item If $g \in \mathcal{N}$, let $\mathcal{\widehat U}_N=g$. Otherwise define $ \mathcal{\widehat U}_N \in \mathcal{N} $ as minimizer of the terminal loss function
$ L_N \colon \mathcal{N} \to \R, \quad  \mathcal{U} \mapsto  \mathbb{E}|g(X_N)-\mathcal{U}(X_N)|^2 .$	
	\item For $i=N-1,\dots,1,0,$  define the integral operator
	\begin{align} \label{eq:Ii}
		\mathcal{I}[\mathcal{\widehat U}_{i+1},X_i](X^{c}_{i+1}):=\int_{\R^d}   \rho(z) \big [\mathcal{\widehat U}_{i+1}(X_{i+1}^c+\gamma^X(X_{{i}},z))-\mathcal{\widehat U}_{i+1}(X_{i+1}^c)\big ] \,  \nu (\ud z) \, , \quad
	\end{align} and choose
	 $ \mathcal{\widehat U}_i $ as minimizer of the  loss function
$L_i \colon \mathcal{N} \to \R$,
	\begin{align}\label{eq:loss_fun}
	\mathcal{U} \mapsto \mathbb{E}\Big|\mathcal{\widehat U}_{i+1}(X_{i+1})-\mathcal{U}(X_{i})-
    \Delta t \, f\Big(t_{i},X_{{i+1}},\mathcal{\widehat U}_{i+1}(X_{i+1})\sigma(X_{i}) D_x \mathcal{\widehat U}_{i+1}(X_{i+1}),
      \mathcal{I}[\mathcal{\widehat U}_{i+1},X_i](X^{c}_{i+1})\Big)   \Big|^2 \quad \ \
	\end{align}
	\end{enumerate}
\end{algo}

Conditions on $\mathcal{N}$ which ensure that the minimization problems in Steps~1 and 2 possess a solution are discussed in the next section.

\begin{remark}[Numerical implementation] In order to implement the algorithm numerically, one generates a large number $M$ of paths $X^1, \dots, X^M$ of $X$ and approximates the loss function $L_i$ by
\begin{align} \label{eq:loss-fun-discrete}
\frac{1}{M} \sum_{m=1}^M & \Big | \, \mathcal{\widehat U}_{i+1}(X_{i+1}^m)-\mathcal{U}(X_{i}^m) -
    \Delta t \, f\Big(t_{i},X_{{i+1}}^m,\mathcal{\widehat U}_{i+1}(X_{i+1}^m),\sigma(X_{i}^m) D_x \mathcal{\widehat U}_{i+1}(X_{i+1}^m), \mathcal{I}[\mathcal{\widehat U}_{i+1},X_i^m](X^{m,c}_{i+1}) \Big) \,\Big |^2.
\end{align}
To minimize \eqref{eq:loss-fun-discrete}  the network representing $\mathcal{U}$ is  trained using stochastic gradient descent,  see for instance \cite{frey2021deep} for details.    Note that the evaluation of the integral term
$$\int_{\R^d}[\mathcal{\widehat U}_{i+1}(X^{m,c}_{i+1}+\gamma^X(X_{{i}}^m,z))-\mathcal{\widehat U}_{i+1}(X^{m,c}_{i+1})] \, \nu (\ud z)$$
in \eqref{eq:loss-fun-discrete} is also a high-dimensional problem. However this problem may be tackled with Monte Carlo approximations that can be done off-line, that is before the training procedure for the network.
\end{remark}

\begin{remark}[The linear case] \label{remark:linearcase}
Suppose that the FBSDEJ is \emph{linear}, that is $f = f(t,x)$ and define
\begin{equation} \label{eq:def-Hi}
H_i := g(X_N) - \Delta t\sum_{n=i}^{N-1} f(t_n,X_n) \text{ for }\, i=0,1,\dots,N-1.
\end{equation}
In the linear case  the backward Euler reduces to the simpler expression
$\bar V_i = \mathbb{E}_i \big [H_i \big]$ (by the law of iterated conditional expectations). Using the $L^2$-minimality of conditional expectations we therefore define $\mathcal{\widehat U}_i \in \mathcal{N} $ as minimizer of the loss function
\begin{equation} \label{eq:loss-function-linear}
L_i^{\text{lin}}(\mathcal{U})=  \mathbb{E} \big |  \mathcal{U}(X_{i}) - H_i \big |^2 \,.
\end{equation}
Minimizing the loss function $L_i^{\text{lin}}$ corresponds to the algorithm proposed by \citet{beck2021solving} for  linear parabolic PDEs. Note that in the linear case there is  no need to work with networks of $C^1$ functions.
\end{remark}

\section{Bound on the approximation error } \label{sec:errorbound}

In this section we discuss error  bounds for the DS algorithm.   We assume that the class $\mathcal{N}$ of  network functions has the following properties.
\begin{ass} \label{ass:N}
 There exist constants $\gamma, \eta >0$ such that for all $\mathcal{U} \in \mathcal{N}$
\begin{align*}
|\mathcal{\widehat U}_{i}(x)-\mathcal{\widehat U}_{i}(x')| &\le \gamma( 1+ \max(|x|_2 ,|x'|_2))|x-x'|_2\; \text{ and }\;
|D_x\mathcal{\widehat U}_{i}(x)-D_x\mathcal{\widehat U}_{i}(x')|_2 \le \eta |x-x'|_2 \, .
\end{align*}
Moreover, we suppose that  the set $\{ |\mathcal{U} (0)| + |D_x \mathcal{U}(0)|_2 \colon  \mathcal{U} \in \mathcal{N} \}$ is bounded and that $\mathcal{N}$ is closed with respect to $C^1$-convergence on compact subsets of $\R^d$.
\end{ass}
In the sequel  we write $\mathcal{N}^{\gamma,\eta}$ to indicate the dependence of  $\mathcal{N}$  on these parameters. Functions from $\mathcal{N}^{\gamma,\eta}$ satisfy the quadratic growth condition
\begin{equation} \label{eq:growth}
\mathcal{U}(X)^2  \le C({\gamma,\eta})( 1 +  | x |_2^2) \text{ for some  constant $C({\gamma,\eta})$.}
 \end{equation}

 \begin{example} A  class of networks with a single hidden layer (shallow network)  that satisfy Assumption~\ref{ass:conditions} is considered in  \citet{germain2020deep}. They  consider network functions $\varphi: \R^d \to \R$,   with $m$ nodes and activation function \texttt{ReLu$_{2}$}, i.e. $\rho(x)=\max(0,x)^2$ that take the form
\begin{equation} \label{eq:networks}
\varphi(x)=\mathcal{W}_1\rho(\mathcal{W}_0 x+\beta_0 )+\beta_1    \, ,
\end{equation}
for network parameters $\theta=(\mathcal{W}_0,\beta_0,\mathcal{W}_1,\beta_1) \in \R^{m\times d} \times \R^m \times \R^{1 \times m} \times \R$ and where for $x \in \R^m$, $\rho(x) = (\rho(x_1), \dots, \rho(x_m))^\top$.
We denote for $R,\kappa \geq 1 $ by $\mathcal{N}_{d,m}^{R,\kappa}$ the set of network functions \eqref{eq:networks} with parameters $(\mathcal{W}_0,\beta_0,\mathcal{W}_1,\beta_1)$ satisfying row by row
\begin{align*}
|(\mathcal{W}_0^i,\beta_0^i/R)|_2 = \frac{1}{R}, i=1,2,\dots,m, \text{ and } |(\mathcal{W}_1,\beta_1)|_1 \le \kappa,
\end{align*}
where $|\cdot|_1$ and $|\cdot|_2$ are the $\ell_1$ and $\ell_2$ norms in Euclidean spaces. It is shown in \citet{germain2020deep} that functions $\varphi \in \mathcal{N}_{d,m}^{R,\kappa}$ and their derivatives $D_x \varphi(x)$ are Lipschitz and satisfy in particular Assumption~\ref{ass:N}. 
 \end{example}

We now show that under Assumption~\ref{ass:N}, the minimization problems in Algorithm~\ref{alg:1} do  have a solution. The Arzela Ascoli theorem implies  that $\mathcal{N}^{\gamma,\eta}$ is  compact with respect to $C^1$-convergence on compact subsets of $\R^d$. Fix now some grid  point $t_i$ and consider a sequence $\mathcal{U}^m \in \mathcal{N}^{\gamma, \eta}$ such that $L_i (\mathcal{U}^m) \to L_i^* := \inf \{L_i (\mathcal{U}) \colon \mathcal{U} \in \mathcal{N}^{\gamma, \eta}\}$ for $m \to \infty$. By compactness there is a subsequence $\mathcal{U}^{m'}$ and some $\mathcal{U}^* \in \mathcal{N}^{\gamma, \eta}$ with $\lim_{m' \to \infty} \mathcal{U}^{m'} = \mathcal{U}^*$  (with respect to ${C}^1$-convergence on compact sets). We want to show that $\mathcal{U}^*$ is a minimizer of $L_i$, i.e. $L_i^*=L_i(\mathcal{U}^*)$. To this note first note that $\mathcal{U}^{m'}(X_i)$ converges to  $\mathcal{U}^*(X_i) $ in $L^2$ for $m' \to \infty$, since
$$ \bE | \mathcal{U}^{m'}(X_i) -  \mathcal{U}^*(X_i) |^2 =  \bE \big [ (\mathcal{U}^{m'}(X_i) -  \mathcal{U}^*(X_i))^2 \I_{|X_i|_2 \le R} \big ] +   \bE \big [ (\mathcal{U}^{m'}(X_i) -  \mathcal{U}^*(X_i))^2 \I_{|X_i|_2 > R} \big ] \, .
$$
By the quadratic growth condition~\eqref{eq:growth} the second term on the right is bounded by
$$2 C({\gamma,\eta}) \bE \big[ (1 + |X_i|_2^2 ) \I_{|X_i|_2 > R} \big ] \, , $$ and this  converges to zero for $R \to \infty$ as $|X_i|_2$ is square integrable. Moreover,  for fixed $R$ the first term on the right converges to zero for $m' \to \infty$ as $\mathcal{U}^{m'} \to \mathcal{U}^*$ in $C^1$ on the compact set $\{ |x_2| \le R\}$. Second, we get from the triangle inequality that
$ L_i(\mathcal{U}^*)^{1/2} \le \|\mathcal{U}^{m'} - \mathcal{U}^*\|_2 + L_i(\mathcal{U}^{m'})^{1/2}$. Now the r.h.s. converges to $(L_i^*)^{1/2}$  for $m' \to \infty$, so that
 $L_i(\mathcal{U}^*) = L_i^*$ and $\mathcal{U}^* $ is in fact a minimizer of $L_i$.

To derive bounds on the approximation error we use the approximation result for the backward Euler scheme $\bar V_i$ and compare it to the output  of the DS algorithm $\mathcal{\widehat U}_i$. For this we introduce for $i=0,1,\dots,N-1$
\begin{align}
	&V_i := \mathbb{E}_i\Big[\mathcal{\widehat U}_{i+1}(X_{{i+1}})-  f\big(t_{i},X_{{i}},\mathcal{\widehat U}_{i+1}(X_{i+1}),
 \mathbb{E}_i[\sigma(X_{i})^\top D_x \mathcal{\widehat U}_{i+1}(X_{i+1})],\label{eq:Vi}  \mathbb{E}_i [\mathcal{I}[\mathcal{\widehat U}_{i+1},X_i](X^{c}_{i+1})]\big) \hspace{0.3mm} \Delta t   \Big] \qquad
\end{align}	
 and $V_N=\mathcal{\widehat U}_N(X_N)$. By the Markov property of $X=(X_{i})_{i =0,1,\dots,N}$ we have $V_i=v_i(X_i)$ for some functions $v_i: \R^d \to \R$, $i=0,1,\dots,N-1$, and we can introduce the $L^2$-approximation error  of the functions $v_i$ and $g$ in the class $\mathcal{N}^{\gamma,\eta}$ by
\begin{equation} \label{eq:eps-gamma-eta}
	\epsilon_i^{\gamma,\eta}=
	\begin{cases}
	\inf_{\mathcal U \in \mathcal{N}^{\gamma,\eta}} \mathbb{E} |v_i(X_i)-\mathcal{U}(X_i)|^2, \quad i=0,1,\dots,N-1  ,\\
		\inf_{\mathcal U \in \mathcal{N}^{\gamma,\eta}} \mathbb{E} |g(X_N)-\mathcal{U}(X_N)|^2 \, ,  \quad i=N  .
	\end{cases}
\end{equation}

The next theorem is the main result of our paper.

\begin{theorem}[Bound on the approximation error]\label{th:approx_result}
	Let the conditions (1)-(4) in Assumption~\ref{ass:conditions} hold and assume that $\mathcal{X}_0 \in L^4(\mathcal{F}_0,\R^d)$. Let $\widehat{\mathcal{ U}}_i \in \mathcal{N}^{\gamma,\eta}$, $0 \le i \le N$,  be the output of the DS scheme. Then, there exist constants $C>0$ (depending on $b, \sigma, \nu(\R^d), f,g,d,T,\mathcal{X}_0$) and $ C({\gamma,\eta})> 0 $ (depending on the Lipschitz constants $\gamma$, $\eta$)  such that in the limit $|\pi| \rightarrow 0$
	\begin{align} \label{eq:approx_error}
	\sup_{i \in \{0,1,\dots,N\}} \mathbb{E} \big|  Y_{t_i}-\widehat{\mathcal{ U}}_i(X_i) \big|^2 \le C\Big(|\pi| + \epsilon^Z(\pi)+ \epsilon^\Gamma(\pi) + C({\gamma,\eta}) |\pi|  + \epsilon_N^{\gamma,\eta} + N \sum_{i=0}^{N-1} \epsilon_i^{\gamma,\eta}\Big) \, .
	\end{align}
\end{theorem}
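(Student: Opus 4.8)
The plan is to insert the explicit backward Euler triple $(\bar V_i,\bar Z_i,\bar\Gamma_i)$ of Section~\ref{sec:disc} between the true solution and the DS output. Setting $a_i:=\mathbb{E}\big|\bar V_i-\widehat{\mathcal U}_i(X_i)\big|^2$, the elementary inequality $|x+y|^2\le 2|x|^2+2|y|^2$ gives
\[
\mathbb{E}\big|Y_{t_i}-\widehat{\mathcal U}_i(X_i)\big|^2\le 2\,\mathbb{E}\big|Y_{t_i}-\bar V_i\big|^2+2\,a_i ,
\]
and the first term is already controlled by Proposition~\ref{prop:backward-Euler}, which contributes the $C\big(|\pi|+\epsilon^Z(\pi)+\epsilon^\Gamma(\pi)\big)$ part of \eqref{eq:approx_error}. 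The rest of the proof is a backward induction for the sequence $(a_i)$, with terminal value $a_N=\epsilon_N^{\gamma,\eta}$ (by Step~1 of Algorithm~\ref{alg:1} and \eqref{eq:eps-gamma-eta}), which will be closed by the discrete Gr\"onwall lemma --- legitimate once $|\pi|$ is small enough that $(1+C|\pi|)^N\le e^{CT}$.

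First I would exploit the $L^2$-geometry of the loss \eqref{eq:loss_fun}. Writing $S_i$ for the random variable whose squared $L^2$-distance to $\mathcal U(X_i)$ is minimised there, orthogonality of the conditional expectation gives $L_i(\mathcal U)=\mathbb{E}\,\var_i(S_i)+\mathbb{E}\big|\mathbb{E}_i[S_i]-\mathcal U(X_i)\big|^2$, so that $\widehat{\mathcal U}_i$ is a best-in-class approximation in $\mathcal N^{\gamma,\eta}$ of $\mathbb{E}_i[S_i]$. The difference between $\mathbb{E}_i[S_i]$ and $V_i=v_i(X_i)$ from \eqref{eq:Vi} is merely the discrepancy between $f$ evaluated at $X_{i+1}$ with the ``raw'' arguments of the loss and $f$ evaluated at $X_i$ with the conditionally averaged arguments of \eqref{eq:Vi}; by the Lipschitz property of $f$ (Assumption~\ref{ass:conditions}(3)), the Lipschitz and growth bounds on $\mathcal N^{\gamma,\eta}$ (Assumption~\ref{ass:N} and \eqref{eq:growth}) and the regularity $\|X_{i+1}-X_i\|_4\le C\sqrt{|\pi|}$ of Lemma~\ref{lemma:forward-Euler}, one obtains $\mathbb{E}\big|\mathbb{E}_i[S_i]-V_i\big|^2\le C({\gamma,\eta})\,|\pi|^{3}$. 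Hence, using $|x+y|^2\le(1+|\pi|)|x|^2+(1+|\pi|^{-1})|y|^2$ together with \eqref{eq:eps-gamma-eta},
\[
a_i\le(1+C|\pi|)\,\mathbb{E}\big|\bar V_i-V_i\big|^2+C\,|\pi|^{-1}\,\epsilon_i^{\gamma,\eta}+C({\gamma,\eta})\,|\pi|^{2} ,
\]
and it is the factor $|\pi|^{-1}=N/T$ in front of $\epsilon_i^{\gamma,\eta}$ that, after summing over $i$ and applying Gr\"onwall, produces the term $C\,N\sum_{i=0}^{N-1}\epsilon_i^{\gamma,\eta}$ of \eqref{eq:approx_error}, while $C({\gamma,\eta})|\pi|^2$ per step sums to $C({\gamma,\eta})|\pi|$ (up to the factor $T$).

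Next I would bound $\mathbb{E}\big|\bar V_i-V_i\big|^2$. Both $\bar V_i$ and $V_i$ are conditional expectations of the form ``(terminal term)$\,-\,|\pi|\,f(\cdots)$''; subtracting, applying $|x+y|^2\le(1+|\pi|)|x|^2+(1+|\pi|^{-1})|\pi|^2|\cdots|^2$ and the Lipschitz bound on $f$, everything reduces to (i) the propagated error $\mathbb{E}\big|\bar V_{i+1}-\widehat{\mathcal U}_{i+1}(X_{i+1})\big|^2=a_{i+1}$, (ii) the gap between $\bar Z_i=\mathbb{E}_i\big[\bar V_{i+1}\Delta W_i/|\pi|\big]$ and its DS proxy $\mathbb{E}_i\big[\sigma(X_i)^\top D_x\widehat{\mathcal U}_{i+1}(X_{i+1})\big]$, and (iii) the gap between $\bar\Gamma_i=\mathbb{E}_i\big[\bar V_{i+1}\Delta M_i/|\pi|\big]$ and its DS proxy $\mathbb{E}_i\big[\mathcal I[\widehat{\mathcal U}_{i+1},X_i](X^c_{i+1})\big]$. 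For (ii), conditioning additionally on the jump part of $X_{i+1}$ and using the Gaussian integration-by-parts (Stein) identity for the Brownian increment shows that $\mathbb{E}_i\big[\widehat{\mathcal U}_{i+1}(X_{i+1})\Delta W_i/|\pi|\big]=\mathbb{E}_i\big[\sigma(X_i)^\top D_x\widehat{\mathcal U}_{i+1}(X_{i+1})\big]$ exactly, so only the replacement of $\bar V_{i+1}$ by $\widehat{\mathcal U}_{i+1}(X_{i+1})$ contributes, costing $\tfrac{Cd}{|\pi|}\,\mathbb{E}\,\var_i\big(\bar V_{i+1}-\widehat{\mathcal U}_{i+1}(X_{i+1})\big)\le\tfrac{Cd}{|\pi|}\,a_{i+1}$, i.e. $\le C|\pi|\,a_{i+1}$ once carried against the driver's $|\pi|^2$ factor. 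For (iii) one uses the non-local counterpart of this identity, a Mecke-type formula for the compensated Poisson integral,
\[
\mathbb{E}_i\big[\widehat{\mathcal U}_{i+1}(X_{i+1})\Delta M_i/|\pi|\big]=\mathbb{E}_i\!\int_{\R^d}\rho(z)\big(\widehat{\mathcal U}_{i+1}(X_{i+1}+\gamma^X(X_i,z))-\widehat{\mathcal U}_{i+1}(X_{i+1})\big)\,\nu(\ud z) ,
\]
which reduces the $\Gamma$-gap to the same $\bar V_{i+1}\to\widehat{\mathcal U}_{i+1}(X_{i+1})$ replacement (controlled by $\mathbb{E}_i|\Delta M_i|^2=|\pi|\int\rho^2\,\ud\nu$) plus the replacement of $X_{i+1}$ by $X^c_{i+1}$ inside the integral operator, which by the growth of $\gamma^X$ (Assumption~\ref{ass:conditions}(2)), boundedness of $\rho$, finiteness of $\nu$ and the Lipschitz constants $\gamma,\eta$ of $\mathcal N^{\gamma,\eta}$ is of higher order in $|\pi|$. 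Collecting the $\gamma,\eta$-dependent remainders over the $N$ steps yields the $C({\gamma,\eta})|\pi|$ term; throughout, the hypothesis $\mathcal X_0\in L^4$ is used to bound moments such as $\mathbb{E}\big[(1+|X_i|_2^2)\,|X_{i+1}-X_i|_2^2\big]$ by Cauchy--Schwarz.

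Putting the two estimates together produces a recursion $a_i\le(1+C|\pi|)\,a_{i+1}+b_i$ with $a_N=\epsilon_N^{\gamma,\eta}$ and $\sum_{i=0}^{N-1}b_i\le C\big(C({\gamma,\eta})|\pi|+N\sum_{i=0}^{N-1}\epsilon_i^{\gamma,\eta}\big)$; the discrete Gr\"onwall lemma gives $\sup_i a_i\le e^{CT}\big(a_N+\sum_{i=0}^{N-1}b_i\big)$, and combining with the bound for $\mathbb{E}|Y_{t_i}-\bar V_i|^2$ from Proposition~\ref{prop:backward-Euler} yields \eqref{eq:approx_error}. I expect the main obstacle to be step (iii): establishing the discrete non-local integration-by-parts identity that relates $\bar\Gamma_i$ to the operator $\mathcal I[\widehat{\mathcal U}_{i+1},X_i]$ with the correct remainder order, and --- because the driver $f$ now also depends on the non-local argument $\mathcal I[u]$ --- propagating this extra argument consistently through every Lipschitz estimate above. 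This is precisely the part with no analogue in \cite{germain2022approximation}, and where the jump-specific arguments enter.
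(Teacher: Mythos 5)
Your proposal is correct and follows the same overall architecture as the paper's proof: you insert the backward Euler triple and the intermediate quantity $V_i$ of \eqref{eq:Vi}, exploit the $L^2$-projection structure of the loss \eqref{eq:loss_fun}, control the $\bar Z_i$-gap by Gaussian integration by parts conditionally on the jump component exactly as in Lemma~\ref{lemma:upperZ}, and close with the discrete Gr\"onwall lemma. Your conditional-variance decomposition $L_i(\mathcal U)=\mathbb E[\var_i(S_i)]+\mathbb E\big|\mathbb E_i[S_i]-\mathcal U(X_i)\big|^2$ is a repackaging of Lemma~\ref{lemma:tildeL} (the paper isolates the orthogonal part via the martingale representation theorem instead), and running Gr\"onwall on $a_i=\mathbb E|\bar V_i-\widehat{\mathcal U}_i(X_i)|^2$ rather than on $\mathbb E|\bar V_i-V_i|^2$ is an equivalent bookkeeping choice. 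The one genuine divergence is the $\bar\Gamma_i$-gap, i.e.\ Lemma~\ref{lemma:upperGamma}: the paper conditions on the jump count $\Delta N_i$ and sums the Poisson series explicitly, obtaining only an approximate identity (a prefactor $e^{-\bar\nu}$ plus an $O(\Delta t)$ remainder) but directly in terms of the scheme's operator $\mathcal I[\widehat{\mathcal U}_{i+1},X_i](X^{c}_{i+1})$, whereas your Mecke identity is exact (adding a point at $(s,z)$ shifts $X_{i+1}$ by $\gamma^X(X_i,z)$ and leaves $\rho(z)$ unchanged) but delivers the non-local operator evaluated at the full $X_{i+1}$, so you must still pay for the substitution $X_{i+1}\to X^{c}_{i+1}$. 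One point of care there: that substitution costs $O(|\pi|)$ in mean square (driven by $\mathbb E|X^J_{i+1}|^4=O(|\pi|)$), which is one order worse than the paper's $O(|\pi|^2)$ bound for its term (A) and not obviously ``higher order''; it is nevertheless harmless, since it enters the recursion multiplied by $|\Delta t|^2/\beta\sim|\Delta t|$ and thus sums over the $N$ steps to a contribution absorbed by the $C(\gamma,\eta)|\pi|$ term in \eqref{eq:approx_error}. With that order bookkeeping made explicit, both routes produce the same recursion and the same final bound.
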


The first three terms in \eqref{eq:approx_error} correspond to the approximation error of the backward Euler scheme. Note that when $g \in \mathcal{N}^{\gamma,\eta}$ then one can initialize the scheme with $\mathcal{ \widehat U}_N=g$ and the term $\epsilon_N^{\gamma,\eta}$ vanishes.

Before proceeding to the proof of Theorem~\ref{th:approx_result} we show that in the linear case discussed in Remark~\ref{remark:linearcase} there is a simpler  bound on the approximation error.
\begin{proposition}
Let the conditions (1)-(4) in Assumption~\ref{ass:conditions} hold and assume that $\mathcal{X}_0 \in L^4(\mathcal{F}_0,\R^d)$. If $f=f(t,x)$ then
	\begin{align} \label{eq:approx_error_linear}
	\sup_{i \in \{0,1,\dots,N\}} \mathbb{E} \big|  Y_{t_i}-\widehat{\mathcal{ U}}_i(X_i) \big|^2 \le C\Big(|\pi| + \epsilon^Z(\pi)+ \epsilon^\Gamma(\pi) + \bar \epsilon_i \Big) \, ,
	\end{align}
	where $\bar \epsilon_i$ is the $L^2$-approximation error of $\bar V_i =  \mathbb{E}_i[ H_i] $ (see \eqref{eq:def-Hi}), which is defined by	$\bar \epsilon_i := \inf_{\mathcal U \in \mathcal{N}^{\gamma,\eta}} \mathbb{E} |\bar V_i-\mathcal{U}(X_i)|^2$, $i=0,1,\dots,N-1 \,$.
\end{proposition}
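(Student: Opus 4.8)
The plan is to compare the network output $\widehat{\mathcal{U}}_i(X_i)$ with the explicit backward Euler value $\bar V_i$, which in the linear case collapses to $\bar V_i=\mathbb{E}_i[H_i]$ (Remark~\ref{remark:linearcase}), and then to use Proposition~\ref{prop:backward-Euler} to pass from $\bar V_i$ to $Y_{t_i}$. For each $i\in\{0,\dots,N-1\}$ I would start from
\[
\mathbb{E}\big|Y_{t_i}-\widehat{\mathcal{U}}_i(X_i)\big|^2 \le 2\,\mathbb{E}\big|Y_{t_i}-\bar V_i\big|^2 + 2\,\mathbb{E}\big|\bar V_i-\widehat{\mathcal{U}}_i(X_i)\big|^2 ,
\]
bounding the first summand by $C\big(|\pi|+\epsilon^Z(\pi)+\epsilon^\Gamma(\pi)\big)$ via \eqref{eq:time_dis_error}.

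The heart of the proof is the second summand, where I would exploit the $L^2$-orthogonality of conditional expectation. Since $\bar V_i=\mathbb{E}_i[H_i]$ and $\mathcal{U}(X_i)$ is $\mathcal{F}_{t_i}$-measurable for every $\mathcal{U}\in\mathcal{N}^{\gamma,\eta}$, the Pythagorean identity gives
\[
L_i^{\text{lin}}(\mathcal{U}) = \mathbb{E}\big|\mathcal{U}(X_i)-H_i\big|^2 = \mathbb{E}\big|\mathcal{U}(X_i)-\bar V_i\big|^2 + \mathbb{E}\big|\bar V_i-H_i\big|^2 .
\]
The last term does not depend on $\mathcal{U}$, hence a minimizer of $L_i^{\text{lin}}$ over $\mathcal{N}^{\gamma,\eta}$ is also a minimizer of $\mathcal{U}\mapsto\mathbb{E}|\mathcal{U}(X_i)-\bar V_i|^2$; by the definitions of $\widehat{\mathcal{U}}_i$ and of $\bar\epsilon_i$ this yields the exact identity $\mathbb{E}|\bar V_i-\widehat{\mathcal{U}}_i(X_i)|^2=\bar\epsilon_i$. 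Preliminary to this step I need $H_i\in L^2$, so that $L_i^{\text{lin}}$ is finite and the compactness argument preceding Theorem~\ref{th:approx_result} still produces a minimizer: this holds because $g$ and $f(t_n,\cdot)$ are $K$-Lipschitz and the Euler iterates obey the moment bound $\|X_n\|_{2}\le C(1+\|\mathcal{X}_0\|_{2})$ of Lemma~\ref{lemma:forward-Euler}, with $\mathcal{X}_0\in L^4\subset L^2$ amply sufficient.

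It remains to treat the terminal index $i=N$, where $Y_T=g(\mathcal{X}_T)$ while $\widehat{\mathcal{U}}_N$ equals $g$ or minimizes $\mathcal{U}\mapsto\mathbb{E}|g(X_N)-\mathcal{U}(X_N)|^2$. Splitting
\[
\mathbb{E}\big|g(\mathcal{X}_T)-\widehat{\mathcal{U}}_N(X_N)\big|^2 \le 2K^2\,\mathbb{E}\big|\mathcal{X}_T-X_N\big|_2^2 + 2\,\mathbb{E}\big|g(X_N)-\widehat{\mathcal{U}}_N(X_N)\big|^2
\]
and using Lemma~\ref{lemma:forward-Euler} for the first term and the definition of $\epsilon_N^{\gamma,\eta}$ for the second gives a bound $C|\pi|+2\epsilon_N^{\gamma,\eta}$, so the estimate holds with the convention $\bar\epsilon_N:=\epsilon_N^{\gamma,\eta}$ (which vanishes when $g\in\mathcal{N}^{\gamma,\eta}$). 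Collecting the contributions and taking the supremum over $i$ delivers \eqref{eq:approx_error_linear}. I expect no real obstacle here: unlike the proof of Theorem~\ref{th:approx_result}, the linear case requires neither propagation of errors across the time grid nor any control of the nonlinear driver or the non-local term, so the only points needing care are the square-integrability of $H_i$ and the clean use of the $L^2$-projection identity.
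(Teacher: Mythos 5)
Your proposal is correct and follows essentially the same route as the paper's proof: the same two-term decomposition, Proposition~\ref{prop:backward-Euler} for the Euler error, and the $L^2$-projection (Pythagorean) identity to show that the minimizer of $L_i^{\text{lin}}$ also minimizes $\mathcal{U}\mapsto\mathbb{E}|\bar V_i-\mathcal{U}(X_i)|^2$, so that the second term equals $\bar\epsilon_i$. Your explicit verification that $H_i\in L^2$ and your separate treatment of the terminal index $i=N$ are small completeness additions the paper leaves implicit, but they do not change the argument.
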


\begin{proof}
The approximation error is bounded from above by a sum of two terms,
\begin{align} \label{eq:linbound1}\mathbb{E} \big| Y_{t_i}-\widehat{\mathcal{ U}}_i(X_i) \big|^2 \le 2 \mathbb{E} \big|  Y_{t_i}- \bar V_i \big|^2 +
        2 \mathbb{E} \big| \bar V_i - \widehat{\mathcal{ U}}_i(X_i) \big|^2 \, .
\end{align}
The first term is the approximation error of the backward Euler scheme, see Proposition~\ref{prop:backward-Euler}. Now note that 
$ \widehat{\mathcal{ U}}_i(X_i) \in L^2 (\Omega, \F_{t_i}, P)$ and that $\bar V_i$ is the projection of  $H_i$ on $L^2(\Omega, \F_{t_i}, P)$. Hence 
\begin{equation} \label{eq:lin_bound}
\mathbb{E} \big| \bar H_i - \widehat{\mathcal{ U}}_i(X_i) \big|^2 = \mathbb{E} \big| H_i - \bar V_i \big |^2 +
    \mathbb{E} \big|\bar V_i -  \widehat{\mathcal{ U}}_i(X_i) \big|^2 \, .
\end{equation}
 It follows from \eqref{eq:lin_bound} that $\widehat{\mathcal{ U}}_i$, which is the minimizer of $ L_i^{\text{lin}} (\mathcal{U})=\mathbb{E} \big|H_i -  {\mathcal{ U}}(X_i) \big|^2 $ by definition, is also the  minimizer of the loss function $\mathcal{U} \mapsto  \mathbb{E} \big | \bar V_i - \mathcal{U}(X_i) \big |^2$, so that the second term in \eqref{eq:linbound1} equals $\bar \epsilon_i$.
\end{proof}

\begin{remark} A logical next step is to construct an approximating sequence with
$\sup_{i \in \{0,1,\dots,N\}} \mathbb{E} \big|\,Y_{t_i}-\widehat{\mathcal U}_i(X_i) \big|^2 = O(1/N)$. For this one has to  
assume that Assumption~\ref{ass:conditions}~(5) holds,  so that $\epsilon^Z(\pi)$ is of order $O(1/N)$. Moreover, and this is the difficult point, we need to control the approximation error $\bar \epsilon_i$ by a proper choice of the network functions. In the \emph{linear case} we need to ensure that the errors  $\bar \epsilon_i$ are of order $N^{\, {-}1}$.  The existence of such networks is ensured by universal approximation theorems such \citet{hornik1989multilayer}, \cite{hornik1990universal}. \citet{bib:gonon2021deep} give conditions which ensure that the approximation error can be made small without curse of dimensionality, i.e. by using networks with size growing only polynomial in the dimension $d$. 

In the general (nonlinear)  case  things are more involved: we need to construct networks (find sets  $\mathcal{N}^{\gamma,\eta}$) such that the errors $\epsilon_i^{\gamma,\eta}$ are of order $N^{\, {-} 3}$. Moreover, the functions $v_i$ depend on the choice of the network  $\mathcal{N}^{\gamma,\eta}$ so that control of the approximation error is a non-standard approximation problem. In \cite{germain2022approximation} this problem is used by considering Groupsort networks. However, these networks are not everywhere differentiable so that they cannot be employed to study the DS algorithm. Moreover, in the practical implementation of the method \cite{germain2022approximation} use deep (multilayer) networks for which no results on the approximation error is available. For these reasons we leave the analysis of the errors $\epsilon_i^{\gamma,\eta}$ for future research  and rely on numerical case studies to gauge accuracy and performance of our methodology (see Section \ref{sec:numerical}). 
\end{remark}

\section{Proof of Theorem~\protect{\ref{th:approx_result}}} \label{sec:proof}

Our proof  follows \cite{germain2022approximation} where the case without jumps is studied. However, we need to incorporate substantial  changes due to the non-local character of our problem. In the sequel we assume w.l.o.g that $\rho(z) \equiv 1$. Throughout this paper $C>0$ will denote a fixed constant, only depending on the dimension and parameters, but not on a partition. It may change from one line to another. To stress dependence on other parameters we will write $C(\cdot)$; for example to point out that the constant is dependent on the dimension $d$ we  write $C(d)$.

The starting point of the proof is the following decomposition of the approximation error $\mathbb{E}|Y_{t_i}-\mathcal{\widehat U}_i(X_{i})|^2$ into three terms
\begin{align} \label{eq:basic-error-decomp}
\mathbb{E}|Y_{t_i}-\mathcal{\widehat U}_i(X_{i})|^2 \le 3 \Big(\mathbb{E} |Y_{t_i}-\bar V_i|^2+\mathbb{E} |\bar V_{i}- V_i|^2+\mathbb{E} | V_{i}- \mathcal{\widehat U}_i(X_i)|^2  \Big) \, .
\end{align}
The first term is the approximation error from the explicit Euler scheme.  The following lemmas will be instrumental for deriving bounds on the other two terms.

\begin{lemma}  \label{lemma:tildeL}
	Define the loss function $\widetilde{L}_i(\mathcal{U}_i):= \mathbb{E} \big |\, V_i-\mathcal{U}_i(X_i)+\Delta f_i \Delta t \big|^2$, where we let
\begin{align*}
	\Delta f_i&:=f\big(t_{i},X_{{i}}, \mathcal{\widehat U}_{i+1}(X_{i+1}),\mathbb{E}_i [\sigma(X_{i})^\top D_x \mathcal{\widehat U}_{i+1}(X_{i+1})],\mathbb{E}_i [\mathcal{I}[\mathcal{\widehat U}_{i+1},X_i](X^{c}_{i+1})]\big)\\
&\qquad -f\big(t_{i},X_{{i+1}},\mathcal{\widehat U}_{i+1}(X_{i+1}),\sigma(X_{i})^\top D_x \mathcal{\widehat U}_{i+1}(X_{i+1}),\mathcal{I}[\mathcal{\widehat U}_{i+1},X_i](X^{c}_{i+1})\big) \, .
\end{align*}
	Then $\widehat {\mathcal{U}_i} \in \mathcal{N}^{\gamma,\eta}$ is a minimizer of $\widetilde{L}_i$ if and only if it is a minimizer of the loss function  $L_i$ used in the DS algorithm (see  \eqref{eq:loss_fun}).
\end{lemma}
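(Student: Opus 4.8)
The plan is to recognise both $L_i$ and $\widetilde L_i$ as $L^2$-projection functionals onto the subspace $\{\mathcal U(X_i):\mathcal U\in\mathcal N^{\gamma,\eta}\}$ whose two ``targets'' have the same $\F_{t_i}$-conditional expectation. Since an orthogonal-projection problem depends on its target only through the conditional mean, the two functionals then differ by an additive constant and hence share the same set of minimisers. Before carrying this out I would fix the integrability framework: combining the quadratic growth bound~\eqref{eq:growth} and the Lipschitz estimate on $D_x\mathcal{\widehat U}_{i+1}$ from Assumption~\ref{ass:N}, the moment bounds $\|X_i\|_4\le C(1+\|\mathcal X_0\|_4)$ of Lemma~\ref{lemma:forward-Euler} (available since $\mathcal X_0\in L^4$), the boundedness and Lipschitz continuity of $\gamma^X$, and the Lipschitz continuity of $f$ and $\sigma$ from Assumption~\ref{ass:conditions}, one checks that $\mathcal{\widehat U}_{i+1}(X_{i+1})$, $\sigma(X_i)^\top D_x\mathcal{\widehat U}_{i+1}(X_{i+1})$, $\mathcal{I}[\mathcal{\widehat U}_{i+1},X_i](X^{c}_{i+1})$, $\Delta f_i$, $V_i$ and every $\mathcal U(X_i)$ with $\mathcal U\in\mathcal N^{\gamma,\eta}$ are square-integrable, so that all conditional expectations below exist and the $L^2$-Pythagoras identity applies.

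Next I would introduce $G_i:=\mathcal{\widehat U}_{i+1}(X_{i+1})-\Delta t\,\Phi_i$, where $\Phi_i$ denotes the $f$-term appearing in the DS loss~\eqref{eq:loss_fun}, so that $L_i(\mathcal U)=\mathbb E|G_i-\mathcal U(X_i)|^2$, and $\widetilde G_i:=V_i+\Delta t\,\Delta f_i$, so that $\widetilde L_i(\mathcal U)=\mathbb E|\widetilde G_i-\mathcal U(X_i)|^2$. Writing $\Psi_i$ for the $f$-term in the definition~\eqref{eq:Vi} of $V_i$, the definitions give $V_i=\mathbb E_i[\mathcal{\widehat U}_{i+1}(X_{i+1})-\Delta t\,\Psi_i]$ and $\Delta f_i=\Psi_i-\Phi_i$; since $V_i$ is $\F_{t_i}$-measurable, taking $\mathbb E_i$ of $\widetilde G_i$ cancels the $\Psi_i$-terms and leaves
\[
\mathbb E_i[\widetilde G_i]=\mathbb E_i[\mathcal{\widehat U}_{i+1}(X_{i+1})]-\Delta t\,\mathbb E_i[\Psi_i]+\Delta t\big(\mathbb E_i[\Psi_i]-\mathbb E_i[\Phi_i]\big)=\mathbb E_i[\mathcal{\widehat U}_{i+1}(X_{i+1})]-\Delta t\,\mathbb E_i[\Phi_i]=\mathbb E_i[G_i].
\]

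To conclude I would invoke the elementary identity $\mathbb E|\xi-\mathcal U(X_i)|^2=\mathbb E|\xi-\mathbb E_i[\xi]|^2+\mathbb E|\mathbb E_i[\xi]-\mathcal U(X_i)|^2$, valid for any $\xi\in L^2$ because $\mathcal U(X_i)$ is $\F_{t_i}$-measurable, applied once with $\xi=G_i$ and once with $\xi=\widetilde G_i$; subtracting and using $\mathbb E_i[G_i]=\mathbb E_i[\widetilde G_i]$ shows that $L_i(\mathcal U)-\widetilde L_i(\mathcal U)$ does not depend on $\mathcal U$. Hence $L_i$ and $\widetilde L_i$ attain their infima at exactly the same $\mathcal U\in\mathcal N^{\gamma,\eta}$, which is the assertion. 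I do not expect a genuine obstacle here: the argument is pure bookkeeping with conditional expectations, and the only step needing care is tracking which arguments of $f$ are $\F_{t_i}$-measurable so as to verify $\mathbb E_i[G_i]=\mathbb E_i[\widetilde G_i]$ — precisely the identity that the definition of $\Delta f_i$ is engineered to enforce.
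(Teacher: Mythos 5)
Your proposal is correct, and it reaches the conclusion by a genuinely different route than the paper. The paper applies the martingale representation theorem to $\mathcal{\widehat U}_{i+1}(X_{i+1})-\Delta t\,\Psi_i$ (whose $\F_{t_i}$-conditional expectation is $V_i$), writes it as $V_i$ plus stochastic integrals over $(t_i,t_{i+1}]$, substitutes this into $L_i$, and expands with the conditional It\^o isometry; the surviving extra terms (the quadratic variations of the integrals and a cross term with $\Delta f_i$) do not involve $\mathcal{U}_i$, so $L_i-\widetilde L_i$ is constant in $\mathcal{U}_i$. You instead recognise both functionals as $L^2$-distances $\mathbb E|G_i-\mathcal U(X_i)|^2$ and $\mathbb E|\widetilde G_i-\mathcal U(X_i)|^2$ from targets satisfying $\mathbb E_i[G_i]=\mathbb E_i[\widetilde G_i]$ (your cancellation of the $\Psi_i$-terms is exactly right, and it is the identity the definition of $\Delta f_i$ is built to produce), and then invoke the Pythagoras decomposition $\mathbb E|\xi-\mathcal U(X_i)|^2=\mathbb E|\xi-\mathbb E_i[\xi]|^2+\mathbb E|\mathbb E_i[\xi]-\mathcal U(X_i)|^2$. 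What your approach buys is economy: it needs neither the martingale representation theorem nor the It\^o isometry, only the orthogonal-projection property of conditional expectation, and it isolates the structural reason the lemma holds — an $L^2$-regression onto $\F_{t_i}$-measurable functions of $X_i$ sees its target only through the conditional mean. What the paper's route buys is that the explicit $(\widehat Z,\widehat U)$-representation and the isometry computation are reused in spirit elsewhere in the error analysis (Lemmas~\ref{lemma:upperZ} and~\ref{lemma:upperGamma}), so the machinery is not wasted. Your attention to square-integrability of all the ingredients is appropriate and matches what the paper implicitly assumes.
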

\begin{proof}
	Fix $i \in \{0,1,\dots,N-1\}$. By  the martingale representation theorem (Lemma~\ref{lemma:martingalerep}) there exists $(\widehat Z,\widehat U) \in L_W^2(\R^d) \times L^2_J(\R)$ such that
	\begin{align}
	\mathcal{\widehat U}_{i+1}(X_{{i+1}})& -f\big(t_{i},X_{{i}},\mathcal{\widehat U}_{i+1}(X_{i+1}),\mathbb{E}_i[\sigma(X_{i})^\top D_x \mathcal{\widehat U}_{i+1}(X_{i+1})],\mathbb{E}_i \big [\mathcal{I}[\mathcal{\widehat U}_{i+1},X_i](X^{c}_{i+1})\big]\big) \Delta t \\
	&=V_i+\int_{t_i}^{t_{i+1}} \widehat Z_s \, \ud W_s + \int_{t_i}^{t_{i+1}} \widehat U_s(z) \, \widetilde{J}(\ud s, \ud z) \, .
	\end{align}
	Plugging this representation for $\mathcal{\widehat U}_{i+1}(X_{{i+1}})$ into the loss function $L_i(\mathcal{U}_i)$ from \eqref{eq:loss_fun} gives 	
	\begin{align}\nonumber
L_i(\mathcal{U}_i)&=\mathbb{E}\Big|V_i-\mathcal{U}_{i}(X_{i})+\Delta f_i \Delta t  +\int_{t_i}^{t_{i+1}} \widehat Z_s \, \ud W_s + \int_{t_i}^{t_{i+1}} \widehat U_s(z) \, \widetilde{J}(\ud s, \ud z)\,\Big|^2
\\	\label{eq:loss_fun_tilde} &=\widetilde{L}_i(\mathcal{U}_i)+\mathbb{E}\Big[\int_{t_i}^{t_{i+1}} | \widehat Z_s|_2^2 \, \ud s \Big] + \mathbb{E}\Big[\int_{t_i}^{t_{i+1}} \int_{\R^d} | \widehat U_s(z)|^2 \, \nu(\ud z)  \ud s \Big]
\\& \qquad  \qquad+ 2 \Delta t \mathbb{E} \Big[\Delta f_i  \Big(\int_{t_i}^{t_{i+1}} \widehat Z_s \, \ud W_s + \int_{t_i}^{t_{i+1}} \widehat U_s(z) \, \widetilde{J}(\ud s, \ud z)\Big)\Big] \, ,
	\end{align}
	where we used the It\^o Isometry (Lemma~\ref{lemma:isometry}) in the last step. As  the loss functions \eqref{eq:loss_fun_tilde} depends  on $\mathcal{U}_i$ only via $\widetilde{L}_i(\mathcal{U}_i)$,  $L_i(\mathcal{U}_i)$ and $\widetilde{L}_i(\mathcal{U}_i)$ are both minimized by the same   $\widehat {\mathcal{U}_i} \in \mathcal{N}^{\gamma, \eta}$.
\end{proof}

The next lemma gives an estimate for the last term in \eqref{eq:basic-error-decomp}.
\begin{lemma}  \label{lemma:upper_term3} There is a constant $C(\gamma,\eta)>0$ such that
	\begin{align}
	\mathbb{E} \big | V_i-\mathcal{\widehat U}_i(X_i)\big |^2 \le C \big(\epsilon_i^{\gamma,\eta} +  C({\gamma,\eta})|\Delta t_i|^3\big)\, .
	\end{align}
\end{lemma}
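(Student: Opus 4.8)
The plan is to exploit Lemma~\ref{lemma:tildeL}, which tells us that $\widehat{\mathcal U}_i$ minimizes $\widetilde L_i(\mathcal U) = \mathbb E|V_i - \mathcal U(X_i) + \Delta f_i \Delta t|^2$ over $\mathcal N^{\gamma,\eta}$. I would first expand $\widetilde L_i$ by adding and subtracting $V_i$'s best approximant and by splitting off the $\Delta f_i \Delta t$ term. Write $\mathbb E|V_i - \widehat{\mathcal U}_i(X_i)|^2$ using $V_i - \widehat{\mathcal U}_i(X_i) = (V_i - \widehat{\mathcal U}_i(X_i) + \Delta f_i\Delta t) - \Delta f_i \Delta t$ and the inequality $|a+b|^2 \le (1+\beta)|a|^2 + (1+\beta^{-1})|b|^2$ with $\beta = \Delta t$ (or simply $(a+b)^2\le 2a^2+2b^2$ if one is content with a cruder constant), giving
\begin{align*}
\mathbb E|V_i - \widehat{\mathcal U}_i(X_i)|^2 \le (1+\Delta t)\,\widetilde L_i(\widehat{\mathcal U}_i) + (1+\tfrac{1}{\Delta t})\,\Delta t^2\,\mathbb E|\Delta f_i|^2 \,.
\end{align*}
Since $\widehat{\mathcal U}_i$ minimizes $\widetilde L_i$, I can bound $\widetilde L_i(\widehat{\mathcal U}_i) \le \widetilde L_i(\mathcal U)$ for any competitor $\mathcal U \in \mathcal N^{\gamma,\eta}$, and choosing $\mathcal U$ to be (close to) the minimizer in the definition \eqref{eq:eps-gamma-eta} of $\epsilon_i^{\gamma,\eta}$ yields $\widetilde L_i(\widehat{\mathcal U}_i) \le (1+\Delta t)\,\epsilon_i^{\gamma,\eta} + (1+\tfrac1{\Delta t})\Delta t^2\,\mathbb E|\Delta f_i|^2$. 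Absorbing constants, the lemma reduces to showing
\begin{align*}
\mathbb E|\Delta f_i|^2 \le C(\gamma,\eta)\,|\Delta t_i|^2 \,,
\end{align*}
because then the $\Delta t^2 \cdot \mathbb E|\Delta f_i|^2/\Delta t$-type terms are of order $C(\gamma,\eta)|\Delta t_i|^3$ and combine with $\epsilon_i^{\gamma,\eta}$ as claimed.

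The bulk of the work — and the main obstacle — is the estimate $\mathbb E|\Delta f_i|^2 \le C(\gamma,\eta)|\Delta t_i|^2$. Recall $\Delta f_i$ is the difference of $f$ evaluated at two argument tuples that differ only in (i) the second slot, $X_i$ versus $X_{i+1}$; (ii) the fourth slot, $\mathbb E_i[\sigma(X_i)^\top D_x\widehat{\mathcal U}_{i+1}(X_{i+1})]$ versus $\sigma(X_i)^\top D_x\widehat{\mathcal U}_{i+1}(X_{i+1})$; (iii) the fifth slot, $\mathbb E_i[\mathcal I[\widehat{\mathcal U}_{i+1},X_i](X^c_{i+1})]$ versus $\mathcal I[\widehat{\mathcal U}_{i+1},X_i](X^c_{i+1})$. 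By the $[f]_L$-Lipschitz property (Assumption~\ref{ass:conditions}(3)), $|\Delta f_i|$ is controlled by $[f]_L$ times the sum of the absolute differences in these three slots, so it suffices to bound the $L^2$-norm of each difference by $C(\gamma,\eta)\Delta t_i$. For (i) I would use the standard increment estimate $\|X_{i+1}-X_i\|_{2} \le C(1+\|\mathcal X_0\|_2)\sqrt{\Delta t}$ from Lemma~\ref{lemma:forward-Euler}; but note this only gives order $\sqrt{\Delta t}$, so one actually needs to be more careful: the correct route for (ii) and (iii) is that they are \emph{centered} increments — the difference between a random variable and its $\mathcal F_{t_i}$-conditional expectation — and such "martingale increment" type quantities have $L^2$-norm of order $\sqrt{\Delta t}$ as well (via the conditional variance over $[t_i,t_{i+1}]$ and the fact that $D_x\widehat{\mathcal U}_{i+1}$ is $\eta$-Lipschitz with linearly-growing value, hence the conditional variance of $\sigma(X_i)^\top D_x\widehat{\mathcal U}_{i+1}(X_{i+1})$ given $\mathcal F_{t_i}$ is $O(\Delta t)$ times an integrable random variable). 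Squaring then gives order $\Delta t = |\Delta t_i|$, not $|\Delta t_i|^2$ — so I suspect the honest bound is $\mathbb E|\Delta f_i|^2 \le C(\gamma,\eta)|\Delta t_i|$, and the extra $\Delta t^2$ multiplying it in $\widetilde L_i$ yields $|\Delta t_i|^3$; let me restate: it is $\Delta t^2 \,\mathbb E|\Delta f_i|^2 \le \Delta t^2 \cdot C(\gamma,\eta)\Delta t = C(\gamma,\eta)|\Delta t_i|^3$, which is exactly the claim. So the key technical lemma is really $\mathbb E|\Delta f_i|^2 \le C(\gamma,\eta)|\Delta t_i|$.

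Concretely, for slot (i) I would invoke Lipschitz continuity of $f$ in $x$ and $\|X_{i+1}-X_i\|_2^2 = O(\Delta t)$; for slots (ii)–(iii), writing $\xi := \sigma(X_i)^\top D_x\widehat{\mathcal U}_{i+1}(X_{i+1})$, I have $\mathbb E|\xi - \mathbb E_i\xi|^2 = \mathbb E[\var_i(\xi)]$, and $\var_i(\xi) \le \mathbb E_i|\xi - \sigma(X_i)^\top D_x\widehat{\mathcal U}_{i+1}(X_i)|^2 \le \|\sigma(X_i)\|^2 \eta^2 \mathbb E_i|X_{i+1}-X_i|^2$, which after taking expectations and using the linear growth of $\sigma$ together with $\mathbb E[|\sigma(X_i)|^2|X_{i+1}-X_i|^2] \le C(1+\|\mathcal X_0\|_4^2)^2\Delta t$ (Cauchy–Schwarz plus the $2p=4$ moment bounds in Lemma~\ref{lemma:forward-Euler}, which is exactly why $\mathcal X_0 \in L^4$ is assumed) gives the $O(\Delta t)$ bound; the integral term (iii) is handled analogously, using that $\mathcal I[\widehat{\mathcal U}_{i+1},X_i](X^c_{i+1})$ is Lipschitz in $X^c_{i+1}$ with constant $C\gamma\,\nu(\mathbb R^d)$ (because $\widehat{\mathcal U}_{i+1} \in \mathcal N^{\gamma,\eta}$ has the linear-growth Lipschitz property of Assumption~\ref{ass:N}), together with $\|X^c_{i+1} - X_i\|_2^2 = O(\Delta t)$. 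Collecting these, $\mathbb E|\Delta f_i|^2 \le C(\gamma,\eta)|\Delta t_i|$, and the lemma follows. The only delicate bookkeeping is keeping track of which constants depend on $(\gamma,\eta)$ and making sure the fourth-moment integrability of $\mathcal X_0$ is genuinely used at the Cauchy–Schwarz step.
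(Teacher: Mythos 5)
Your proposal is correct and follows essentially the same route as the paper: reduce everything to the surrogate loss $\widetilde L_i$ via Lemma~\ref{lemma:tildeL}, use the minimality of $\widehat{\mathcal U}_i$ against an arbitrary competitor, and show $\mathbb E|\Delta f_i|^2\le C(\gamma,\eta)\,\Delta t$ by splitting $\Delta f_i$ into the $X$-increment and the two centered (conditional-variance) terms, each handled with the Lipschitz bounds from Assumption~\ref{ass:N}, Cauchy--Schwarz and the fourth-moment Euler estimates. One small caution: in the first Young step you must take a \emph{fixed} $\beta$ (your cruder $(a+b)^2\le 2a^2+2b^2$ option, which is what the paper effectively does via $(a+b)^2\ge \tfrac12 a^2-b^2$), since $\beta=\Delta t$ would turn $(1+\beta^{-1})\Delta t^2\,\mathbb E|\Delta f_i|^2$ into an $O(|\Delta t|^2)$ term and lose the claimed $|\Delta t|^3$ rate.
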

\begin{proof}
Recall that $\widetilde{L}_i(\mathcal{U}_i) = \mathbb{E} \big| V_i-\mathcal{U}_i(X_i)+ \Delta t \Delta f_i  \big|^2$  and  let $a = V_i - \mathcal{U}_i(X_i)$ and $b= \Delta t \Delta f_i$.  An application of the Young inequality in the form $(a+b)^2 \geq \frac{1}{2}a^2-b^2$ leads to
\begin{equation}\label{eq:ineq1}
	\widetilde{L}_i(\mathcal{U}_i)+ |\Delta t|^2 \mathbb{E}|\Delta f_i|^2 \geq \frac{1}{2}\mathbb{E} |V_i-\mathcal{U}_i(X_i)|^2 \, ,
\end{equation}

Next we derive  an upper estimate on $\widetilde{L}_i(\mathcal{U}_i)+ |\Delta t|^2 \mathbb{E}|\Delta f_i|^2$. Recall that for $a_1,\dots,a_n \in \R$ it holds that  $(a_1+\dots+a_n)^2 \le n(a_1^2 +\dots +a_n^2)$. This gives for $n=2$
\begin{align} \label{eq:lemma_eq1}	
	\widetilde{L}_i(\mathcal{U}_i)&+ |\Delta t|^2 \mathbb{E}|\Delta f_i|^2
\le 2 \mathbb{E} |V_i-\mathcal{U}_i(X_i)|^2 + 3 |\Delta t|^2 \mathbb{E}|\Delta f_i|^2 \, .
\end{align}
Moreover, we get, using the above inequality with $n=3$ and the Lipschitz property  of $f$,
\begin{align*}
 \mathbb{E}|\Delta f_i|^2  &\le    3  {[f]}_L  \Big( \mathbb{E}|X_{i+1}-X_i|_2^2
	 + \mathbb{E}\big | \sigma(X_i)^\top D_x\mathcal{ \widehat{U}}_{i+1}(X_{i+1})-\mathbb E_i[\sigma(X_i)^\top D_x\mathcal{\widehat{U}}_{i+1}(X_{i+1})]\big |_2^2
\\ & \qquad \qquad   +\mathbb{E}\big|\mathcal{I}[\mathcal{\widehat U}_{i+1},X_i](X^{c}_{i+1})-\mathbb E_i[\mathcal{I}[\mathcal{\widehat U}_{i+1},X_i](X^{c}_{i+1})]\big |^2 	\Big) \\
	& \le 3 {[f]}_L \Big( \mathbb{E}|X_{i+1}-X_i|_2^2   + \mathbb{E}\big[|\sigma(X_i)^\top|_2^2 \big|D_x\mathcal{ \widehat{U}}_{i+1}(X_{i+1})-D_x\mathcal{ \widehat{U}}_{i+1}(X_{i})\big|_2^2\big]\\
& \qquad \qquad  +\bE \big|\mathcal{I}[\mathcal{\widehat U}_{i+1},X_i](X^{c}_{i+1})-\mathcal{I}[\mathcal{\widehat U}_{i+1},X_i](X_{{i}})\big |^2
	\Big)  \, ,
	\end{align*}
where we use in the last inequality the $L^2$-minimization property of conditional expectation $\bE_i[\cdot]$ and the  inequality $ |\sigma(x)^\top z|_2^2 \le  |\sigma(x)^\top|^2_2 |z|^2_2$.
Using the standard estimates of the Euler-Maryuama scheme  gives
\begin{equation}\label{eq:lemma_eq2}
\mathbb{E}|X_{i+1}-X_i|_2^2 \le C(1+\|\mathcal{X}_0\|_2^2)\Delta t\,.
\end{equation}
 Recall that $\mathcal{\widehat U}_{i+1}$ belongs to  $ \mathcal{ N}^{\gamma,\eta} $ so that
 $|D_x \mathcal{\widehat{U}}_{i+1}(X_{i+1})-D_x\mathcal{ \widehat{U}}_{i+1}(X_{i})|_2 \le \eta |X_{i+1}- X_i|_2$. Moreover,
 $|\sigma(x)|^2_2 \le C(1+|x|^2_2)$.  Hence we get from the standard estimates of the Euler-Maryuama scheme that
\begin{align}\label{eq:lemma_eq3}
	 \mathbb{E}\big[|\sigma(X_i)|_2^2\mathbb E_i\big|D_x\mathcal{ \widehat{U}}_{i+1}(X_{i+1})-D_x\mathcal{ \widehat{U}}_{i+1}(X_{i})\big|^2\big] \le C(\eta)(1+\|\mathcal{X}_0\|_4^2)^2\Delta t\,.
\end{align}
Finally we have, using that  $|\mathcal{\widehat U}_{i+1}(x)-\mathcal{\widehat U}_{i+1}(x')|\le C(\gamma)(1+|x|_2+|x'|_2)$ and  $|\gamma^X(x,z)|_2 \le C(1+|x|_2)$,	
\begin{align*}
	\big | \mathcal{I}[\mathcal{\widehat U}_{i+1},& \tilde  x](x)-\mathcal{I}[\mathcal{\widehat U}_{i+1},\tilde x](x')  |
\\&\le \int_{\R^d} \big[ \, |\mathcal{\widehat U}_{i+1}(x+\gamma^X(\tilde{x},z))-\mathcal{\widehat U}_{i+1}(x'+\gamma^X(\tilde{x},z)) | + |\mathcal{\widehat U}_{i+1}(x)-\mathcal{\widehat U}_{i+1}(x')|  \,  \big] \,  \nu (\ud z)  \\
	&\le C(\gamma)  \int_{\R^d} [1+|\gamma^X(\tilde{x},z)|_2+|x|_2+|x'|_2] \, \, \nu(\ud z) \, |x-x'|_2	\\
	& \le C(\gamma) \nu(\R^d) (1+|\tilde{x}|_2+|x|_2+|x'|_2)|x-x'|_2	\, .
\end{align*}
	By the Cauchy-Schwarz inequality and the standard estimates of the Euler-Maruyama scheme we get
\begin{align}	\label{eq:lemma_eq4}
	\bE|\mathcal{I}[\mathcal{\widehat U}_{i+1},X_i](X^{c}_{i+1})-\mathcal{I}[\mathcal{\widehat U}_{i+1},X_i](X_{i})|^2
 \le C(\gamma) (1+\|\mathcal{ X}_0\|_4^2)^2 \Delta t\, .
\end{align}
Plugging  \eqref{eq:lemma_eq2}, \eqref{eq:lemma_eq3}, \eqref{eq:lemma_eq4} into \eqref{eq:lemma_eq1} we get
	\begin{align} \label{eq:ineq2}
	\widetilde{L}_i(\mathcal{U}_i)+| \Delta t|^2 \mathbb{E}|\Delta f_i|^2 \le 2 \Big( \mathbb{E} |V_i-\mathcal{U}_i(X_i)|^2 + C({\gamma,\eta})|\Delta t|^3\Big) \, .
	\end{align}
	By applying inequality \eqref{eq:ineq1} to $\mathcal{U}_i=\mathcal{\widehat U}_i$ we get for generic $\mathcal{U}_i$
	\begin{align} \label{eq:ineq21}
	 \frac{1}{2}\mathbb{E} |V_i-\mathcal{\widehat U}_i(X_i)|^2 \le \widetilde{L}_i(\mathcal{\widehat U}_i)+| \Delta t|^2 \mathbb{E}|\Delta f_i|^2  \le  \widetilde{L}_i(\mathcal{ U}_i)+| \Delta t|^2 \mathbb{E}|\Delta f_i|^2
	\end{align}	
where the second inequality follows as $\widehat{\mathcal{U}}_i$ is a minimizer of ${L}_i$ and hence by Lemma~\ref{lemma:tildeL} a minimizer of $\widetilde{L}_i$.
	Combining this with \eqref{eq:ineq2} gives
\begin{align} \label{eq:ineq3}
	\mathbb{E} |V_i-\mathcal{\widehat U}_i(X_i)|^2 \le C \Big( \mathbb{E} |V_i-\mathcal{U}_i(X_i)|^2 +  C({\gamma,\eta})|\Delta t|^3\Big) \,. 	\end{align}
	By minimizing over all $\mathcal U_i \in \mathcal{ N}_i^{\gamma,\eta}$ we get $
	\mathbb{E} |V_i-\mathcal{\widehat U}_i(X_i)|^2 \le C \Big(\epsilon_i^{\gamma,\eta} +  C({\gamma,\eta})|\Delta t|^3\Big)$.

\end{proof}
The next two lemmas are needed to estimate the middle term in \eqref{eq:basic-error-decomp}, i.e.~$\mathbb{E}\big | \bar V_i - V_i \big |^2$.

\begin{lemma} \label{lemma:upperZ} It holds that
	\begin{align}\label{eq:ineqZ}
	 \mathbb{E}\big |\mathbb{E}_i[\sigma(X_{i}) D_x \mathcal{\widehat U}_{i+1}(X_{i+1})]-\bar Z_i \big |^2_2 &
   \le
\frac{d}{\Delta t} \mathbb{E} \Big ( \var_i \big ( \mathcal{\widehat U}_{i+1}(X_{i+1})-\bar V_{i+1}\big ) \Big) \, .
	\end{align}
\end{lemma}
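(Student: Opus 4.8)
\emph{Proof plan.} The plan is to represent the conditional expectation $\mathbb{E}_i[\sigma(X_i)^\top D_x\mathcal{\widehat U}_{i+1}(X_{i+1})]$ by an integration-by-parts formula so that it takes the same ``stochastic weight'' form as $\bar Z_i=\mathbb{E}_i[\bar V_{i+1}\,\Delta W_i/\Delta t]$, and then to compare the two. The key tool is the identity
\begin{equation*}
\mathbb{E}_i\big[\phi(X_{i+1})\,\Delta W_i\big]=\Delta t\,\mathbb{E}_i\big[\sigma(X_i)^\top D_x\phi(X_{i+1})\big]\,,
\end{equation*}
valid for every $\phi\in C^1(\R^d)$ whose value and gradient grow at most polynomially. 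To prove it I would use Gaussian integration by parts: conditionally on $\F_{t_i}$ the increment $\Delta W_i$ is centered Gaussian with covariance $\Delta t\,I_d$ and is independent of the pure-jump contribution $X_{i+1}^J=\int_{t_i}^{t_{i+1}}\!\!\int_{\R^d}\gamma^X(X_i,z)\,\widetilde J(\ud t,\ud z)$, while $X_{i+1}=\big(X_i+b(X_i)\Delta t+X_{i+1}^J\big)+\sigma(X_i)\Delta W_i$ with the bracketed term $\F_{t_i}$-measurable up to a part independent of $\Delta W_i$. Conditioning on that bracketed term and applying the one-dimensional Stein identity in each coordinate of $\Delta W_i$ produces the Jacobian factor $\sigma(X_i)^\top D_x\phi(X_{i+1})$. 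The integrability needed to differentiate under the Gaussian expectation is guaranteed by Assumption~\ref{ass:N} (the network functions and their gradients grow at most linearly) together with the moment bounds for $X_{i+1}$ from Lemma~\ref{lemma:forward-Euler}.

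Applying this identity with $\phi=\mathcal{\widehat U}_{i+1}$ and using the definition of $\bar Z_i$ then gives
\begin{equation*}
\mathbb{E}_i\big[\sigma(X_i)^\top D_x\mathcal{\widehat U}_{i+1}(X_{i+1})\big]-\bar Z_i=\mathbb{E}_i\Big[\big(\mathcal{\widehat U}_{i+1}(X_{i+1})-\bar V_{i+1}\big)\frac{\Delta W_i}{\Delta t}\Big]\,.
\end{equation*}
Since $\mathbb{E}_i[\Delta W_i]=0$, I may replace the scalar $D:=\mathcal{\widehat U}_{i+1}(X_{i+1})-\bar V_{i+1}$ by its centered version $D-\mathbb{E}_i[D]$ on the right-hand side without changing it, and $\mathbb{E}_i[(D-\mathbb{E}_i D)^2]=\var_i(D)$.

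Finally, for each coordinate $k\in\{1,\dots,d\}$ the conditional Cauchy--Schwarz inequality yields
\begin{equation*}
\Big|\,\mathbb{E}_i\Big[(D-\mathbb{E}_i D)\,\frac{\Delta W_i^k}{\Delta t}\Big]\Big|^2\le \var_i(D)\,\frac{\mathbb{E}_i\big[(\Delta W_i^k)^2\big]}{(\Delta t)^2}=\frac{1}{\Delta t}\,\var_i(D)\,,
\end{equation*}
and summing over $k=1,\dots,d$ and taking unconditional expectations gives exactly \eqref{eq:ineqZ}. The only genuinely delicate point is the integration-by-parts identity: one must record carefully the conditional independence of $\Delta W_i$ and $X_{i+1}^J$ under $\P$ and justify the interchange of differentiation and expectation via the polynomial growth of $D_x\mathcal{\widehat U}_{i+1}$; once this is in place, the rest is the two-line Cauchy--Schwarz argument above. (The transpose on $\sigma$ is as in \eqref{eq:Vi} and \eqref{eq:classical-sol}; it is omitted in the lemma statement for brevity.)
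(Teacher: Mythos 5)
Your proposal is correct and follows essentially the same route as the paper: the Malliavin/Gaussian integration-by-parts identity (obtained, as you note, by conditioning on the jump part $X_{i+1}^J$ so that $X_{i+1}$ depends on $\Delta W_i$ only through the affine term $\sigma(X_i)\Delta W_i$), followed by centering with $\mathbb{E}_i[\Delta W_i]=0$ and conditional Cauchy--Schwarz; your coordinate-wise application of Cauchy--Schwarz summed over $k$ is equivalent to the paper's vector form using $\mathbb{E}_i|\Delta W_i|_2^2=d\,\Delta t$. You also correctly identify that the transpose on $\sigma$ is merely a typographical omission in the lemma statement.
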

\begin{proof}
The proof of the lemma can be reduced to the proof of the corresponding result in the no-jump case by conditioning on the jump component
$X_{i+1}^J$ in the decomposition  $X_{i+1} = X_{i+1}^c + X_{i+1}^J$. 	 Using a standard integration by parts argument (see e.g. \citet{fahim2011probabilistic}) we get
	\begin{align*}
	 \mathbb{E}_i \big[ \sigma(X_i)^\top D_x \mathcal{\widehat U}_{i+1}(X_{i+1}) \big | X_{i+1}^J \big]
&=\mathbb{E}_i \Big[ \mathcal{\widehat U}_{i+1}(X_{i+1})\frac{\Delta W_{i}}{\Delta t} \big | X_{i+1}^J\Big]
	\end{align*}
and hence $\mathbb{E}_i  \big[ \sigma(X_i)^\top D_x \mathcal{\widehat U}_{i+1}(X_{i+1}) \big] = \mathbb{E}_i \Big[ \mathcal{\widehat U}_{i+1}(X_{i+1})\frac{\Delta W_{i}}{\Delta t}  \Big]$.
Recall now the definition of  $\bar Z_i$ in the backward Euler scheme. We get
	\begin{align}
	&\mathbb{E}_i \big[ \sigma(X_i)^\top D_x \mathcal{\widehat U}_{i+1}(X_{i+1}) \big]-\bar Z_i =\mathbb{E}_i \Big[ \big( \mathcal{\widehat U}_{i+1}(X_{i+1})-\bar V_{i+1}\big) \frac{\Delta W_{i}}{\Delta t} \Big]\\
	&\qquad =\mathbb{E}_i \Big[ \big( \mathcal{\widehat U}_{i+1}(X_{i+1})-\bar V_{i+1}-\mathbb{E}_i[\mathcal{\widehat U}_{i+1}(X_{i+1})-\bar V_{i+1}]\big) \frac{\Delta W_{i}}{\Delta t} \Big] \, .
	\end{align}
	By the Cauchy-Schwarz inequality we obtain
	\begin{align} \label{eq:step2_1}
	\mathbb{E} \Big[ \mathbb{E}_i \Big[ &\big( \mathcal{\widehat U}_{i+1}(X_{i+1})-\bar V_{i+1}-\mathbb{E}_i[\mathcal{\widehat U}_{i+1}(X_{i+1})-\bar V_{i+1}]\big) \frac{\Delta W_{i}}{\Delta t} \Big]^2\Big]\\&\leq \frac{1}{|\Delta t|^2} \bE \Big[\mathbb{E}_i \big|\Delta W_{i}\big|^2 \mathbb{E}_i \big| \mathcal{\widehat U}_{i+1}(X_{i+1})-\bar V_{i+1}-\mathbb{E}_i[\mathcal{\widehat U}_{i+1}(X_{i+1})-\bar V_{i+1}]\big | ^2 \Big]
	\\&=\frac{d}{\Delta t}  \mathbb{E}\Big ( \var_i  \big( \mathcal{\widehat U}_{i+1}(X_{i+1})-\bar V_{i+1}\big ) \Big)
    \end{align}
	where we use $ \mathbb{E}_i |\Delta W_i |^2 = d \Delta t$ and the definition of the conditional variance in the last step.
\end{proof}

\begin{lemma}~\label{lemma:upperGamma} It holds that
	\begin{align} \label{eq:upperGamma}
\mathbb{E}\big | \mathbb{E}_i [\mathcal{I}[\mathcal{\widehat U}_{i+1},X_i](X^{c}_{i+1})]-\bar \Gamma_i\big |^2
&\le \frac{\nu(\R^d)}{\Delta t}  \mathbb{E} \Big ( \var_i \big ( \mathcal{\widehat U}_{i+1}(X_{i+1})-\bar V_{i+1}\big ) \Big)+O(|\Delta t_i|^2) \, .
\end{align}
\end{lemma}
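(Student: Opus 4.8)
The plan is to adapt the argument behind Lemma~\ref{lemma:upperZ}, splitting the left-hand side of \eqref{eq:upperGamma} into a \emph{stochastic} term, which produces the bound $\frac{\nu(\R^d)}{\Delta t}\,\mathbb{E}(\var_i(\cdot))$ in exactly the same way as for $\bar Z_i$, and a deterministic (i.e.\ $\F_{t_i}$-measurable) correction term of order $\Delta t$, which appears because the algorithm evaluates the integral operator at the continuous part $X^c_{i+1}$ rather than at the full state $X_{i+1}=X^c_{i+1}+X^J_{i+1}$. Recall that under the convention $\rho\equiv 1$ one has $\Delta M_i=\int_{t_i}^{t_{i+1}}\int_{\R^d}\widetilde J(\ud s,\ud z)$ and $\bar\Gamma_i=\frac{1}{\Delta t}\mathbb{E}_i[\bar V_{i+1}\Delta M_i]$.

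\emph{Step 1: a jump integration-by-parts identity.} I would first establish
\[
\mathbb{E}_i\big[\mathcal{\widehat U}_{i+1}(X_{i+1})\,\Delta M_i\big]=\Delta t\,\mathbb{E}_i\big[\mathcal{I}[\mathcal{\widehat U}_{i+1},X_i](X_{i+1})\big],
\]
the jump counterpart of the Brownian identity $\mathbb{E}_i[\sigma(X_i)^\top D_x\mathcal{\widehat U}_{i+1}(X_{i+1})]=\mathbb{E}_i[\mathcal{\widehat U}_{i+1}(X_{i+1})\Delta W_i/\Delta t]$ used for Lemma~\ref{lemma:upperZ}. To prove it, condition on $\F_{t_i}$ and on the Brownian increment $\Delta W_i$ (so that $X^c_{i+1}$ is a constant), write $\Delta M_i=\int_{t_i}^{t_{i+1}}\int_{\R^d}J(\ud s,\ud z)-\nu(\R^d)\Delta t$, and apply Mecke's formula to the Poisson random measure $J$ on $(t_i,t_{i+1}]\times\R^d$: adding an atom at $(s,z)$ increases the stochastic integral defining $X_{i+1}$ by $\gamma^X(X_i,z)$, so that $\mathbb{E}[\mathcal{\widehat U}_{i+1}(X_{i+1})\int_{t_i}^{t_{i+1}}\int_{\R^d}J(\ud s,\ud z)\mid\F_{t_i},\Delta W_i]=\Delta t\int_{\R^d}\mathbb{E}[\mathcal{\widehat U}_{i+1}(X_{i+1}+\gamma^X(X_i,z))\mid\F_{t_i},\Delta W_i]\,\nu(\ud z)$; subtracting $\nu(\R^d)\Delta t\,\mathbb{E}[\mathcal{\widehat U}_{i+1}(X_{i+1})\mid\F_{t_i},\Delta W_i]$ and taking the outer expectation $\mathbb{E}_i$ yields the identity.

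\emph{Step 2: decomposition.} Using the identity of Step 1 together with the definition of $\bar\Gamma_i$, I would write $\mathbb{E}_i[\mathcal{I}[\mathcal{\widehat U}_{i+1},X_i](X^c_{i+1})]-\bar\Gamma_i=B_i+A_i$ with
\[
B_i:=\mathbb{E}_i\big[\mathcal{I}[\mathcal{\widehat U}_{i+1},X_i](X^c_{i+1})-\mathcal{I}[\mathcal{\widehat U}_{i+1},X_i](X_{i+1})\big],\qquad A_i:=\tfrac{1}{\Delta t}\mathbb{E}_i\big[(\mathcal{\widehat U}_{i+1}(X_{i+1})-\bar V_{i+1})\Delta M_i\big].
\]
The term $A_i$ is treated verbatim as in the proof of Lemma~\ref{lemma:upperZ}: since $\mathbb{E}_i[\Delta M_i]=0$ one may replace $\mathcal{\widehat U}_{i+1}(X_{i+1})-\bar V_{i+1}$ by $\mathcal{\widehat U}_{i+1}(X_{i+1})-\bar V_{i+1}-\mathbb{E}_i[\mathcal{\widehat U}_{i+1}(X_{i+1})-\bar V_{i+1}]$ inside the conditional expectation, and the conditional Cauchy--Schwarz inequality together with $\mathbb{E}_i|\Delta M_i|^2=\var_i(\Delta M_i)=\nu(\R^d)\Delta t$ gives $\mathbb{E}|A_i|^2\le\frac{\nu(\R^d)}{\Delta t}\,\mathbb{E}\big(\var_i(\mathcal{\widehat U}_{i+1}(X_{i+1})-\bar V_{i+1})\big)$, which is the main term of the claimed bound.

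\emph{Step 3: the correction term.} The crux is to show $\mathbb{E}|B_i|^2=O(|\Delta t|^2)$. By Assumption~\ref{ass:N}, the function $x\mapsto\mathcal{I}[\mathcal{\widehat U}_{i+1},X_i](x)$ is $C^1$ with gradient $\int_{\R^d}\big(D_x\mathcal{\widehat U}_{i+1}(x+\gamma^X(X_i,z))-D_x\mathcal{\widehat U}_{i+1}(x)\big)\,\nu(\ud z)$, which is $2\eta\,\nu(\R^d)$-Lipschitz; hence Taylor's formula with integral remainder yields, with $X_{i+1}=X^c_{i+1}+X^J_{i+1}$,
\[
\mathcal{I}[\mathcal{\widehat U}_{i+1},X_i](X_{i+1})-\mathcal{I}[\mathcal{\widehat U}_{i+1},X_i](X^c_{i+1})=D_x\mathcal{I}[\mathcal{\widehat U}_{i+1},X_i](X^c_{i+1})\cdot X^J_{i+1}+R_i,\qquad |R_i|\le\eta\,\nu(\R^d)\,|X^J_{i+1}|_2^2.
\]
Under $\mathbb{E}_i$ the first-order term vanishes: conditionally on $\F_{t_i}$ the Brownian increment $\Delta W_i$ (hence $X^c_{i+1}$) is independent of the jump increment $X^J_{i+1}$, and $\mathbb{E}_i[X^J_{i+1}]=0$ since $X^J_{i+1}$ is an integral against the compensated measure, so that $\mathbb{E}_i\big[D_x\mathcal{I}[\mathcal{\widehat U}_{i+1},X_i](X^c_{i+1})\cdot X^J_{i+1}\big]=\sum_k\mathbb{E}_i\big[(D_x\mathcal{I})_k(X^c_{i+1})\big]\,\mathbb{E}_i\big[(X^J_{i+1})_k\big]=0$. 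For the remainder, the It\^o isometry (Lemma~\ref{lemma:isometry}) and Assumption~\ref{ass:conditions}~(2) give $\mathbb{E}_i|X^J_{i+1}|_2^2=\Delta t\int_{\R^d}|\gamma^X(X_i,z)|_2^2\,\nu(\ud z)\le K^2\nu(\R^d)(1+|X_i|_2)^2\Delta t$, whence $|B_i|\le\mathbb{E}_i|R_i|\le\eta K^2\nu(\R^d)^2(1+|X_i|_2)^2\Delta t$; squaring, taking expectations, and invoking Lemma~\ref{lemma:forward-Euler} together with $\mathcal{X}_0\in L^4$ gives $\mathbb{E}|B_i|^2\le C(1+\|\mathcal{X}_0\|_4^4)\,|\Delta t|^2$.

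\emph{Step 4: conclusion.} It remains to combine the two estimates via $\mathbb{E}|B_i+A_i|^2\le\mathbb{E}|A_i|^2+2\,\mathbb{E}\big(|A_i|\,|B_i|\big)+\mathbb{E}|B_i|^2$; by Young's inequality and the pointwise bound on $|B_i|$ from Step 3 the cross term is absorbed into the main term and an $O(|\Delta t|^2)$ remainder, giving~\eqref{eq:upperGamma}. I expect Step 3 to be the main obstacle: the cancellation of the first-order contribution is precisely what keeps $B_i$ of order $\Delta t$ (and not $\sqrt{\Delta t}$), and it relies on the $C^{1,1}$-regularity of the network functions built into Assumption~\ref{ass:N} and on the conditional independence of the Brownian and Poisson parts of the Euler scheme; a secondary technical point is that Mecke's formula in Step 1 must be applied so as to account for the compensator present in the definition of $X^J_{i+1}$.
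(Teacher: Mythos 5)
Your proof is correct, but it reaches the bound by a genuinely different route than the paper. The paper splits the left-hand side as $2(\mathrm{A})+2(\mathrm{B})$ with $(\mathrm{A})=\bE\big|\mathbb{E}_i[\mathcal{I}[\mathcal{\widehat U}_{i+1},X_i](X^{c}_{i+1})-\mathcal{\widehat U}_{i+1}(X_{i+1})\Delta M_i/\Delta t]\big|^2$, and controls $(\mathrm{A})$ by conditioning on the number of jumps $\Delta N_i$ and expanding the Poisson sum: the $k=0$ and $k=1$ terms reproduce $e^{-\bar\nu}\,\mathbb{E}_i[\mathcal{I}[\mathcal{\widehat U}_{i+1},X_i](X^c_{i+1})]$ and everything else is $O(\Delta t)$ before squaring, using only the Lipschitz growth of $\mathcal{\widehat U}_{i+1}$. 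You instead prove an \emph{exact} Mecke/Palm identity $\mathbb{E}_i[\mathcal{\widehat U}_{i+1}(X_{i+1})\Delta M_i]=\Delta t\,\mathbb{E}_i[\mathcal{I}[\mathcal{\widehat U}_{i+1},X_i](X_{i+1})]$, which shifts the whole $O(\Delta t)$ discrepancy into the single term $B_i=\mathbb{E}_i[\mathcal{I}(X^c_{i+1})-\mathcal{I}(X_{i+1})]$, and then kill the first-order contribution by combining the conditional independence of $\Delta W_i$ and the jump increment with $\mathbb{E}_i[X^J_{i+1}]=0$, leaving a second-order Taylor remainder of size $\eta\,\nu(\R^d)\,|X^J_{i+1}|_2^2$ whose conditional mean is $O(\Delta t)$. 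Both arguments are valid and yield the same rate; yours is structurally cleaner (one exact identity plus one smoothness estimate, no bookkeeping of higher-order jump terms or of the $e^{-\bar\nu}$ factor), at the price of invoking the $\eta$-Lipschitz gradient from Assumption~\ref{ass:N}, which the paper's proof of this lemma does not need. Your treatment of term $A_i$ is identical to the paper's term $(\mathrm{B})$ (and you correctly use $\mathbb{E}_i|\Delta M_i|^2=\nu(\R^d)\Delta t$, where the paper has a typo). One small caveat: your Step 4, like the paper's final combination, only yields the variance term with a constant $(1+\epsilon)\nu(\R^d)$ (resp.\ $2\nu(\R^d)$ in the paper) rather than exactly $\nu(\R^d)$, since the cross term (resp.\ the Young splitting) cannot be absorbed into $O(|\Delta t|^2)$ without inflating the leading constant; this looseness is shared with the published proof and is harmless for the use of the lemma in Theorem~\ref{th:approx_result}.
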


\begin{proof}
 The l.h.s. of \eqref{eq:upperGamma} is bounded by
	\begin{align}
	2\underbrace{\bE \Big|\mathbb{E}_i \Big[\mathcal{I}[\mathcal{\widehat U}_{i+1},X_i](X^{c}_{i+1})- \mathcal{\widehat U}_{i+1}(X_{{i+1}}) \frac{\Delta M_{i}}{\Delta t} \Big] \Big |^2}_{\text{(A)}}
	 +2\underbrace{\mathbb{E}  \Big | \mathbb{E}_i \Big[  \mathcal{\widehat U}_{i+1}(X_{{i+1}}) \frac{\Delta M_{i}}{\Delta t} \Big]-\bar \Gamma_i \Big |^2}_{\text{(B)}} \, .
\end{align}
\textit{Upper bound for term (A).} Recall that $X_{i+1} = X_{i+1}^c + X_{i+1}^J$,  where the jump term equals
$$X_{i+1}^J= \int_{t}^{t_{i+1}} \int_{\R^d} \gamma^X(X_{i},z) \, \widetilde{J}(\ud s,\ud z).$$
Denote by $\Delta N_{i} = J((t_i,t_{i+1}]\times \R^d)$ the number of jumps  in the interval $(t_i,t_{i+1}]$ and note that $\Delta N_{i}$ is Poisson distributed with parameter $\bar \nu :=\nu(\R^d)\Delta t$ such that $\Delta M_{i}=\Delta N_{i}-\bar \nu$.  Let $(\xi_j)_{j \in \bN}$ be a sequence of iid random variables with distribution $\frac{1}{\nu(\R^d)}\nu(\ud z)$.	
By conditioning on  $\Delta N_{i}$  we get
\begin{align}
	\mathbb{E}_i \Big[ \mathcal{\widehat U}_{i+1}(X_{{i+1}}) \frac{\Delta M_{i}}{\Delta t} \Big] 
  =& e^{-\bar \nu} \sum_{k=0}^\infty \bE_i \Big[ \mathcal{\widehat U}_{i+1}\big(X^c_{{i+1}}+\sum_{j=1}^{k}\gamma^X(X_i,\xi_j)\Big) \frac{(k-\bar \nu)}{\Delta t}\Big ]  \frac{\bar \nu^k}{k!} \\
	= & e^{-\bar \nu} \Big\{ \bE_i \big[  \mathcal{\widehat U}_{i+1}(X_{{i+1}}^c) (-\nu(\R^d)) \big]+
        \bE_i \Big[  \mathcal{\widehat U}_{i+1}\big(X_{{i+1}}^c+\gamma^X(X_i,\xi_1)\big) (1-\bar \nu) \Big]\nu(\R^d)\\
     & + \sum_{k=2}^\infty \bE_i \Big[   \mathcal{\widehat U}_{i+1}\Big(X^c_{{i+1}}+\sum_{j=1}^{k}\gamma^X(X_i,\xi_j)\Big) \frac{(k-\bar \nu)}{\Delta t}  \Big] \frac{\bar \nu^k}{k!}\Big\}\\
	= &e^{-\bar \nu} \bE_i \Big[ \int_{\R^d}   [\mathcal{\widehat U}_{i+1}(X_{{i+1}}^c+\gamma^X(X_i,z)) -  \mathcal{\widehat U}_{i+1}(X_{{i+1}}^c)]   \nu(\ud z) \Big]+O(\Delta t) \, , \label{eq:form1}
\end{align}
	where used in the last equality that
	$\bE_i \big[  \mathcal{\widehat U}_{i+1}(X_{{i+1}}^c+\gamma^X(X_i,\xi_1)) \big]=\bE_i \big[ \int_{\R^d}  \mathcal{\widehat U}_{i+1}(X_{{i+1}}^c+\gamma^X(X_i,z)) \frac{\nu(\ud z)}{\nu(\R^d)} \, \big ]. $	
	Using  \eqref{eq:form1} we get that
		\begin{align} \label{eq:errorOt}
	\bE \Big  |& \mathbb{E}_i \Big[\mathcal{I}[\mathcal{\widehat U}_{i+1},X_i](X^{c}_{i+1})- \mathcal{\widehat U}_{i+1}(X_{{i+1}}) \frac{\Delta M_{i}}{\Delta t} \Big]    \Big |^2
\\&
=  \bE \Big |\mathbb{E}_i \Big[\int_{\R^d} (1-e^{-\bar \nu} )  \big[\mathcal{\widehat U}_{i+1}\big(X^c_{i+1}+\gamma^X(X_{{i}},z)\big)-\mathcal{\widehat U}_{i+1}(X^c_{i+1}) \big] \, \nu( \ud z)    \Big]  +O(\Delta t) \Big |^2
\\&
\le 2 (1-e^{-\bar \nu} )^2 \bE \Big |\mathbb{E}_i \Big[\int_{\R^d}   \big[\mathcal{\widehat U}_{i+1}\big(X^c_{i+1}+\gamma^X(X_{{i}},z)\big)-\mathcal{\widehat U}_{i+1}(X^c_{i+1}) \big] \, \nu( \ud z)    \Big] \Big |^2 + O(|\Delta t|^2)\,.
\end{align}
Note now that $(1-e^{-\bar \nu})^2 = (\nu(\R^d)\Delta t)^2 +O(|\Delta t|^3)$ is of  order $|\Delta t|^2$.  Moreover, using  the Lipschitz property of $\mathcal{\widehat U}_{i+1}$, the estimate $|\gamma^X(x,z)|_2 \le  C(|x|_2+1)$, Jensen's inequality and the standard estimates for the Euler Maruyama scheme 	it is easily seen that
\begin{align*}
  \bE \Big |\mathbb{E}_i \Big[\int_{\R^d}   \big[\mathcal{\widehat U}_{i+1}\big(X^c_{i+1}+\gamma^X(X_{{i}},z)\big)-\mathcal{\widehat U}_{i+1}(X^c_{i+1}) \big]  \,  \nu( \ud z)    \Big] \Big |^2 < \infty
\end{align*}
is bounded. Hence we get that term (A) is of order $|\Delta t|^2$.\\
	
\noindent \textit{Upper bound for term (B).} Using the definition of $\bar \Gamma_i$ in the backward Euler scheme we get
	\begin{align}
	\mathbb{E}_i \Big[  \mathcal{\widehat U}_{i+1}(X_{{i+1}}) \frac{\Delta M_{i}}{\Delta t} \Big]-\bar \Gamma_i &=\mathbb{E}_i \Big[ \big( \mathcal{\widehat U}_{i+1}(X_{i+1})-\bar V_{i+1}\big) \frac{\Delta M_{i}}{\Delta t} \Big]\\
	& =\mathbb{E}_i \Big[ \big( \mathcal{\widehat U}_{i+1}(X_{i+1})-\bar V_{i+1}-\mathbb{E}_i[\mathcal{\widehat U}_{i+1}(X_{i+1})-\bar V_{i+1}]\big) \frac{\Delta M_{i}}{\Delta t} \Big].
	\end{align}
	By the Cauchy-Schwarz inequality, the It\^o isometry and the law of iterated expectations we have
\begin{align}\label{eq:step2_2}
	\mathbb{E} \Big[ \mathbb{E}_i \Big[ &\big( \mathcal{\widehat U}_{i+1}(X_{i+1})-\bar V_{i+1}-
        \mathbb{E}_i[\mathcal{\widehat U}_{i+1}(X_{i+1})-\bar V_{i+1}]\big) \frac{\Delta M_{i}}{\Delta t} \Big]^2\Big] \\
    &\leq \frac{1}{|\Delta t|^2}\mathbb{E} \big| \mathbb{E}_i \big|\Delta M_{i}\big|^2
        \mathbb{E}_i \big| \mathcal{\widehat U}_{i+1}(X_{i+1})-\bar V_{i+1}-\mathbb{E}_i[\mathcal{\widehat U}_{i+1}(X_{i+1})-\bar V_{i+1}]\big | ^2 \big|
	\\&=\frac{\nu(\R^d)}{\Delta t}  \mathbb{E}\big[ \var_i (\widehat {\mathcal{ U}}_{i+1}(X_{i+1})-\bar V_{i+1} ) \big ] \, ,
\end{align}
	where we used $ \mathbb{E}_i |\Delta M_{i}|^2=\frac{\nu(\R^d)}{\Delta t}$. Combining this with the estimate for term (A)  gives the result.
\end{proof}

\begin{proof}[Proof of Theorem~\ref{th:approx_result}]
	To find a bound of $	\sup_{i \in \{0,1,\dots,N\}} \mathbb{E}_i \big|  Y_{t}-\widehat{\mathcal{ U}}_i(X_i) \big|^2$ we decompose the approximation error into three terms
	\begin{align} \label{eq:error}
	\mathbb{E}|Y_{t}-\mathcal{\widehat U}_i(X_{i})|^2 \le 3 \big(\mathbb{E} |Y_{t}-\bar V_i|^2+\mathbb{E} |\bar V_{i}- V_i|^2+\mathbb{E} | V_{i}- \mathcal{\widehat U}_i(X_i)|^2  \big) \, .
	\end{align}
	The first term is the classical time discretization error \eqref{eq:time_dis_error} of the backward Euler scheme,  and an estimate  for the third term is given in Lemma~\ref{lemma:upper_term3}.

In the following we concentrate on  the second term. From the expressions of $\bar{V}_i$ and $V_i$ in \eqref{eq:expl_euler_scheme1} and in \eqref{eq:Vi} and by applying the Young inequality $(a+b)^2 \le (1+\beta)a^2+(1+\frac{1}{\beta})b^2$  with $\beta \in (0,1)$ we get
	\begin{align}\label{eq:ineq5}
	&\mathbb{E}\big|\bar{V}_i-V_i\big|^2=\mathbb{E}\big|\mathbb{E}_i \big[ \bar V_{i+1} - \mathcal{\widehat U}_{i+1}(X_{{i+1}}) \big] 	
	-\Delta t\mathbb{E}_i\big[ f(t,X_{i},\bar{V}_{i+1},\bar Z_i, \bar \Gamma_i) \\ & \qquad - f\big(t_{i},X_{{i}},\mathcal{\widehat U}_{i+1}(X_{i+1}),\mathbb{E}_i[\sigma(X_{i}) D_x \mathcal{\widehat U}_{i+1}(X_{i+1})],\mathbb{E}_i [\mathcal{I}[\mathcal{\widehat U}_{i+1},X_i](X^{c}_{i+1})]\big) \big ]\big|^2 \\
	&\leq (1+\beta)\mathbb{E}\big|\mathbb{E}_i \big[ \bar V_{i+1} - \mathcal{\widehat U}_{i+1}(X_{{i+1}}) \big]\big|^2
	+(1+{1}/{\beta})|\Delta t|^2\mathbb{E}\big| f(t,X_{i},\bar{V}_{i+1},\bar Z_i, \bar \Gamma_i) \\ & \qquad - f\big(t_{i},X_{{i}},\mathcal{\widehat U}_{i+1}(X_{i+1}),\mathbb{E}_i[\sigma(X_{i}) D_x \mathcal{\widehat U}_{i+1}(X_{i+1})],\mathbb{E}_i \mathcal{I}[\mathcal{\widehat U}_{i+1},X_i](X^{c}_{i+1})]\big) \big|^2  \\
	& 	 \leq (1+\beta)\mathbb{E}\big|\mathbb{E}_i \big[ \bar V_{i+1} - \mathcal{\widehat U}_{i+1}(X_{{i+1}}) \big]\big|^2
	+3[f]_L(1+{1}/{\beta})|\Delta t|^2\big( \mathbb{E}\big |\mathcal{\widehat U}_{i+1}(X_{i+1})-\bar{V}_{i+1} \big |^2 \\& \qquad
	+\mathbb{E}\big |\mathbb{E}_i[\sigma(X_{i}) D_x \mathcal{\widehat U}_{i+1}(X_{i+1})]-\bar Z_i \big |^2_2 +\mathbb{E}\big |\mathbb{E}_i [\mathcal{I}[\mathcal{\widehat U}_{i+1},X_i](X^{c}_{i+1})]-\bar \Gamma_i\big |^2 \big) \, ,
	\end{align}	
	where we used the Lipschitz condition, $(a+b+c)^2 \le 3(a^2+b^2+c^2)$ and the Cauchy-Schwarz inequality in the last step.
	With Lemma~\ref{lemma:upperZ} and Lemma~\ref{lemma:upperGamma} we obtain
	\begin{align}
	\mathbb{E}|\bar{V}_i-V_i|^2	 &\leq (1+\beta)\mathbb{E}\big|\mathbb{E}_i \big[ \bar V_{i+1} - \mathcal{\widehat U}_{i+1}(X_{{i+1}}) \big]\big|^2 + 3[f]_L(1+\beta)\frac{|\Delta t|^2}{\beta}\Big [ \mathbb{E}\big |\mathcal{\widehat U}_{i+1}(X_{i+1})-\bar{V}_{i+1} \big |^2 \nonumber
		\\& \quad +   C   |\Delta t|^2
	+\frac{\nu(\R^d)+d}{\Delta t}   \big( \mathbb{E}\big |\mathcal{\widehat U}_{i+1}(X_{i+1})-\bar V_{i+1}\big |^2-\mathbb{E}\big|\mathbb{E}_i[\mathcal{\widehat U}_{i+1}(X_{i+1})-\bar V_{i+1}]\big|^2\big) \Big ] \nonumber
	\\& \le (1+C\Delta t) \mathbb{E}\big |\mathcal{\widehat U}_{i+1}(X_{i+1})-\bar V_{i+1}\big |^2+C |\Delta t|^3 \, , \label{eq:ineq6}
	\end{align}
	by choosing explicitly $\beta=3[f]_L (\nu(\R^d)+d)\Delta t $ for $\Delta t$ small enough. By using again Young inequality on the r.h.s. of \eqref{eq:ineq6} with $\beta=\Delta t$
	\begin{align*}
		 \mathbb{E}\big |\mathcal{\widehat U}_{i+1}(X_{i+1})-\bar V_{i+1}\big |^2 \le (1+\Delta t)\mathbb{E}\big |\bar V_{i+1}-V_{i+1}\big |^2 + (1+1/\Delta t) \mathbb{E}\big |\mathcal{\widehat U}_{i+1}(X_{i+1})-V_{i+1}\big |^2
	\end{align*}
	 and since $\Delta t = O(1/N)$, we then get
	\begin{align*}
	\mathbb{E}|\bar{V}_i-V_i|^2	 &\leq  (1+C\Delta t) \mathbb{E}\big |\bar V_{i+1}-V_{i+1}\big |^2+ CN \mathbb{E}\big |\mathcal{\widehat U}_{i+1}(X_{i+1})-V_{i+1}\big |^2+C |\Delta t|^3 \, .
	\end{align*}
	As it holds with Lemma~\ref{lemma:tildeL} that \begin{align} \label{eq:term3}
	\mathbb{E} |V_i-\mathcal{\widehat U}_i(X_i)|^2 \le C \big(\epsilon_i^{\gamma,\eta} +  C({\gamma,\eta})|\Delta t|^3\big) \, ,
	\end{align}
	and  $\bar V_N=g(X_N)$ and $V_N=\mathcal{ \widehat U}_N(X_N)$, we deduce with the discrete Gronwall lemma that
	\begin{align} \label{eq:term2}
	\sup_{i \in \{0,1,\dots,N\}} \mathbb{E}|\bar{V}_i-V_i|^2 \le C \epsilon_N^{\gamma, \eta} &+CN \sum_{i=1}^{N-1}  \big(\epsilon_i^{\gamma,\eta} +  C({\gamma,\eta})|\Delta t|^3\big)+ C \Delta t ^2 \, .
	\end{align}
	The required bound for the approximation error on $Y$ follows by plugging \eqref{eq:time_dis_error}, \eqref{eq:term3},  and \eqref{eq:term2} in \eqref{eq:error}.
\end{proof}

\section{Numerical study} \label{sec:numerical}

We test our algorithm on two examples, a linear and a semilinear PIDE,  for  varying  dimension $d$. In each example we choose maturity $T=1$. The shallow networks from the previous section were chosen for mathematical convenience; for numerical reasons we prefer to work with a network with 2 hidden layers with $d+10$ neurons each.  We use batch normalization and and optimize the loss function using Adam gradient descent with exponentially decreasing learning rate. All computations were run on a Lenovo Thinkpad notebook with an Intel Core i5 processor (1.7 GHz) and 16 GB memory.

\subsection{Linear PIDE}

First we consider an example from \citet{bib:xu2009approximate} who study the pricing of  basket options in jump diffusion models. Basket  options are typically difficult to price due to the lack of an analytic solution. Monte Carlo simulation is a simple and accurate alternative, however, it is very time-consuming.

In this example we assume that underlying asset prices follow jump-diffusion processes with correlated Brownian motions and two types of Poisson jumps in the jump component: a systematic jump that affects all asset prices and idiosyncratic jumps that only affect specific asset. For simplicity we assume deterministic jump sizes. We consider Brownian motions $W^1,\dots,W^d$ with pairwise correlation $\bar \rho$ described by the $d\times d$ correlation matrix $\rho=(\rho_{ij})_{i,j=1,2,\dots,d}$ with $\rho_{ii}=1$ and $\rho_{ij}=\bar \rho$ for $i \neq j$. We assume independent Poisson processes $N^0,\dots,N^d$ with intensities $\lambda^0,\dots,\lambda^d$ and that the Brownian motions and Poisson processes are independent of each other. We consider a equally weighted portfolio composed of $d$ assets with asset prices $S^1,\dots,S^d$ satisfying
\begin{align*}
	\frac{\ud S^i_t}{S^i_t}=r \, \ud t + \sigma_i  \, \ud W^i_t+h^0_i \, \ud [N^0_t-\lambda^0 t] +h^1_i \, \ud [N^i_t-\lambda_i t] \, ,
\end{align*}
for $i=1,2,\dots,d$. Here $r$ is the risk-free interest rate and $\sigma_i$ is the volatility of assets $i$, and $h_i^0$, $h^1_i$ are percentage jump sizes of asset $i$ at jump times of Poisson processes $N^0$ and $N^i$, respectively. All
coefficients are assumed to be constant. The solution to the SDE
\begin{align*}
S_t^i=S_0^ie^{(r-\frac{1}{2}\sigma^2 -h^0_i \lambda^0-h^1_i \lambda_i)t+\sigma_i W^i_t+ \ln(1+h^0_i) N^0_t + \ln(1+h^1_i)N^1_t}.
\end{align*}
can be simulated directly.
The basket value at time $t$ is given by
$B_t =\frac{1}{d}\sum_{i=1}^{d} S^i_t$
and the basket call option price at time 0 is given by $u(0,x)=e^{-rT}\bE_{0}[\max (B_T-K,0)|S_0=x]$. The problem can be described with the PIDE
\begin{align*}
u_t(t,x) &+\sum_{i=1}^{d}(r-h_i^0\lambda^0-h_i^1 \lambda_i^1)x_i   u_{x_i} (t,x) + \lambda^0 [u(t,x  (1+h_0)  )-u(t,x)] \\ &    + \sum_{i=1}^{d}  \lambda_i^1 [u(t,x  (1+ \bar h^i  ))-u(t,x)] +\frac{1}{2}\sum_{i,j=1}^{d}  \sigma_i \sigma_j \rho_{ij} x_i x_j  u_{x_i x_j}(t,x) = 0    & \text{on } [0,T) \times \R^d \, , \\
&u(T,x)= g(x)  \qquad  \text{on } \R^d  \, ,
\end{align*}
where $g(x)=e^{-rT}(x-K)^+$ and $\bar h^i_j=h_i^1$ for $i=j$ and $\bar h^i_j=0$ else.

In our numerical experiments we work with the following parameter values
\begin{align*}
\begin{cases}
	r=0.05, \, \rho=0.2,    \, \sigma= (0.1,\dots,0.1)^T, \, K=1.2 \, ,\\
	h^0=h^1=(0.1,\dots,0.1)^T,  \,  \lambda^0=10, \, \lambda^1=(10,\dots,10)^T \, ,
\end{cases}
\end{align*}
and we compute the approximate solution $\widehat{ \mathcal{U}}_0$ by minimizing the loss function $L_i^{\text{lin}}$  from \eqref{eq:loss-function-linear} for $x \in [0,2]^d$. We use batch size 6000 and 10000 gradient descent iterations with a learning rate 0.01 that decreases after 2000, 4000, and 7000 iterations by factor $10^{{-}1}$. We apply the \texttt{softplus} activation function in the hidden layers and a linear activation function in the output layer.

 We provide two test cases for dimension $d=4$ and $d=10$. The estimate for $u(0,s)$ for $s=(1,\dots,1,x_d), \ x_d \in [0,2]$ and the loss functions of the training procedure are illustrated in Figure  \ref{fig:basketcalloption}. Note that there is more noise in the loss function for $d=4$ than for $d=10$ as we compute the mean over $d$ assets inside the payoff function $g$. Table \ref{tab:basketcalloption} reports the average estimate of $u(0,s_0)$ for $s_0=(1,\dots,1)^T$, and standard deviation observed over 10 independent runs. For $d=4$ and $d=10$ one run takes approximately 150 seconds resp. 550 seconds.  The reference solutions are computed with Monte-Carlo using $10^6$ simulations and are marked as grey dots in Figure \ref{fig:basketcalloption}. Computation of the reference solution via Monte Carlo took around 1.2 resp. 2 seconds per point. This example clearly shows the advantages of the DNN method over standard Monte Carlo for computing the solution on the entire set  $A$. Suppose that we want to compute the solution on $[0,2]^4$ (as in this example). Even the very coarse grid $\{0,0.5,1,1.5,2\}^4$ has already 625 gridpoints, and  computing the solution for each gridpoint takes approximately $1.2 \times 625 = 750$ seconds, which is already about five times the time for training the network.

\begin{table}[t]
	\begin{center}
		\begin{tabular}{l c c c c  }
			& Averaged value & Standard deviation & MC solution & Relative error (\%) \\
			\hline
			\textbf{$d=4$} 	&	0.093935	& 0.000227 &  0.09150 & 2.59 \\
			\textbf{$d=10$} &	0.083854 &  0.000107   & 	0.08236 & 1.81 \\
						\hline
		\end{tabular}
	\end{center}
	\vspace{2mm}
	\caption {Average estimates, standard deviations over 10 independent runs and Monte Carlo solutions using $10^6$ simulations used as benchmark for the relative error are reported.}   \label{tab:basketcalloption}
\end{table}

\begin{figure} %
	\centering
	\subfloat[Estimate of $u(0,s)$ for $d=4$. ]{\includegraphics[scale=0.32]{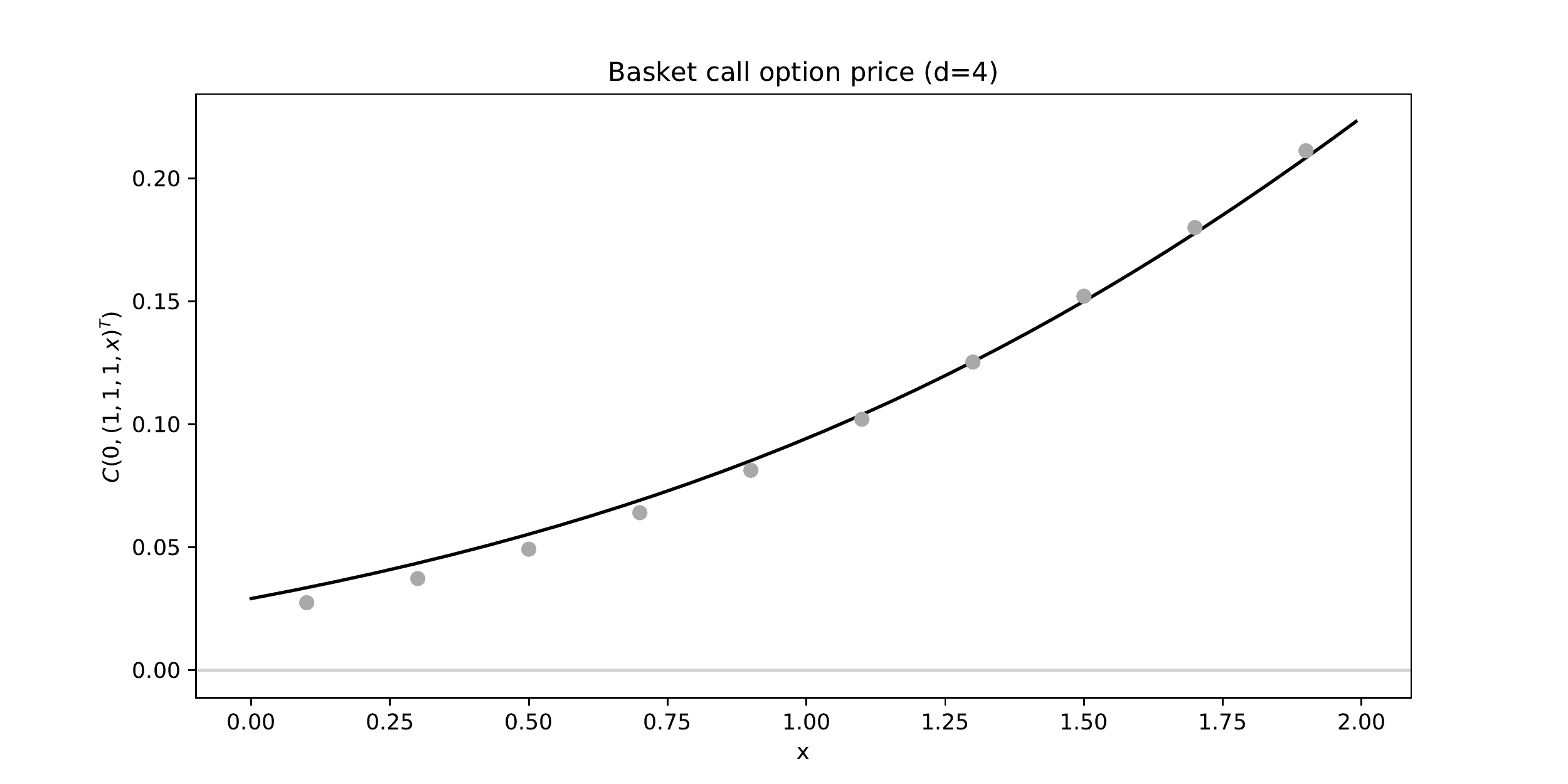}}%
	\qquad
	\subfloat[Loss function for $d=4$.]{\includegraphics[scale=0.32]{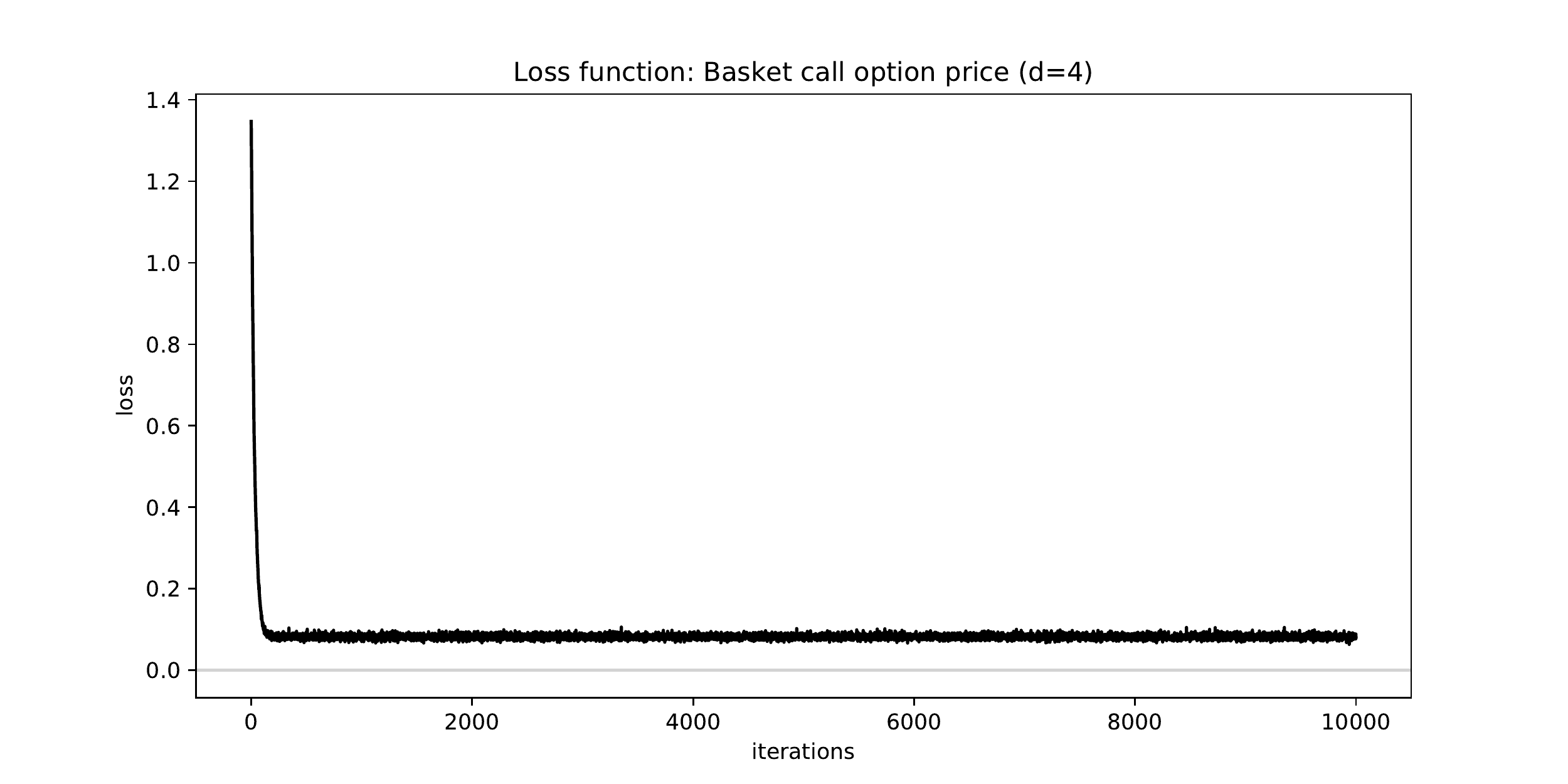}}%
	
	\subfloat[Estimate of $u(0,s)$ for $d=10$. ]{\includegraphics[scale=0.32]{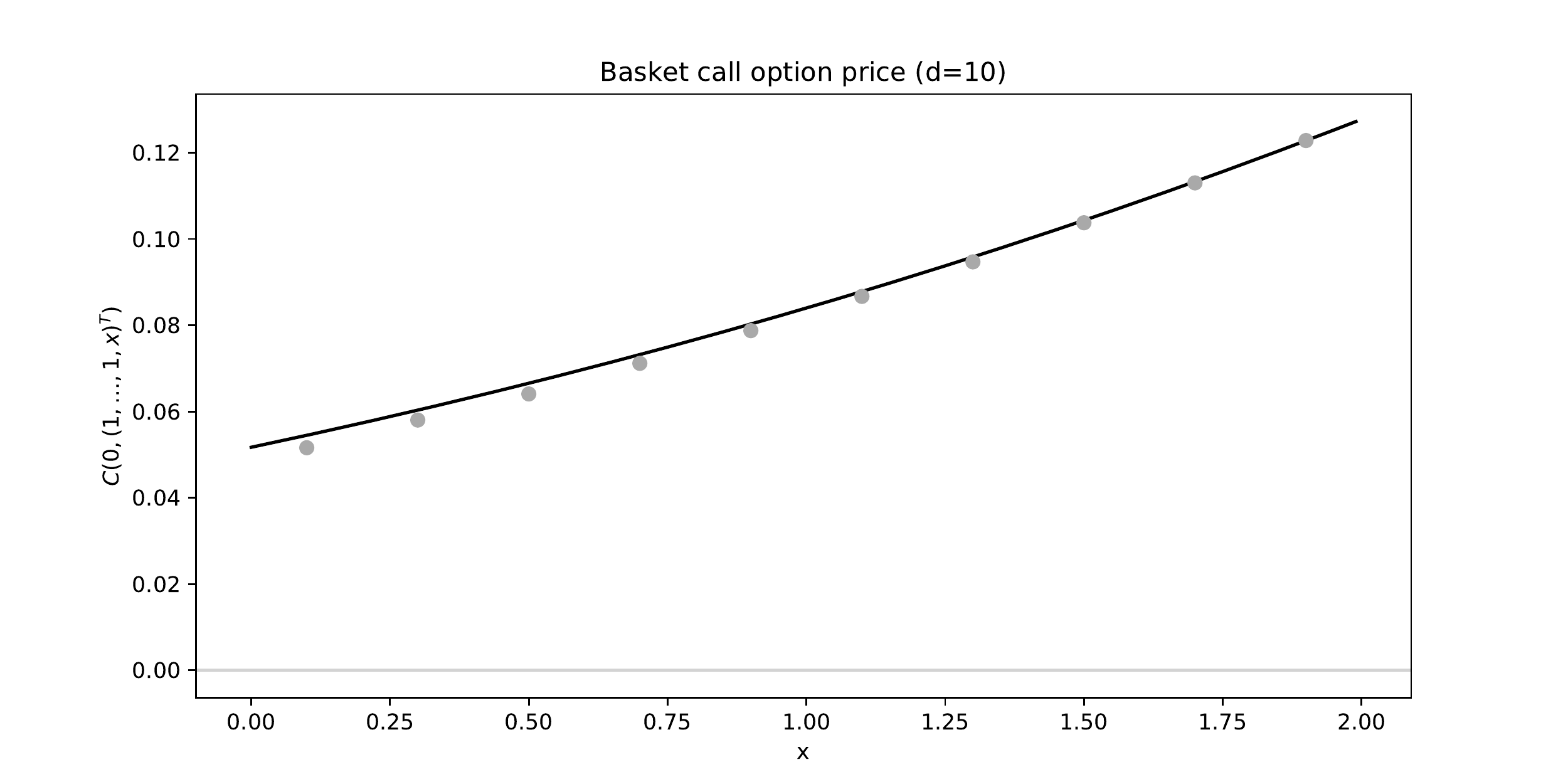}}%
	\qquad
	\subfloat[Loss function for $d=10$.]{\includegraphics[scale=0.32]{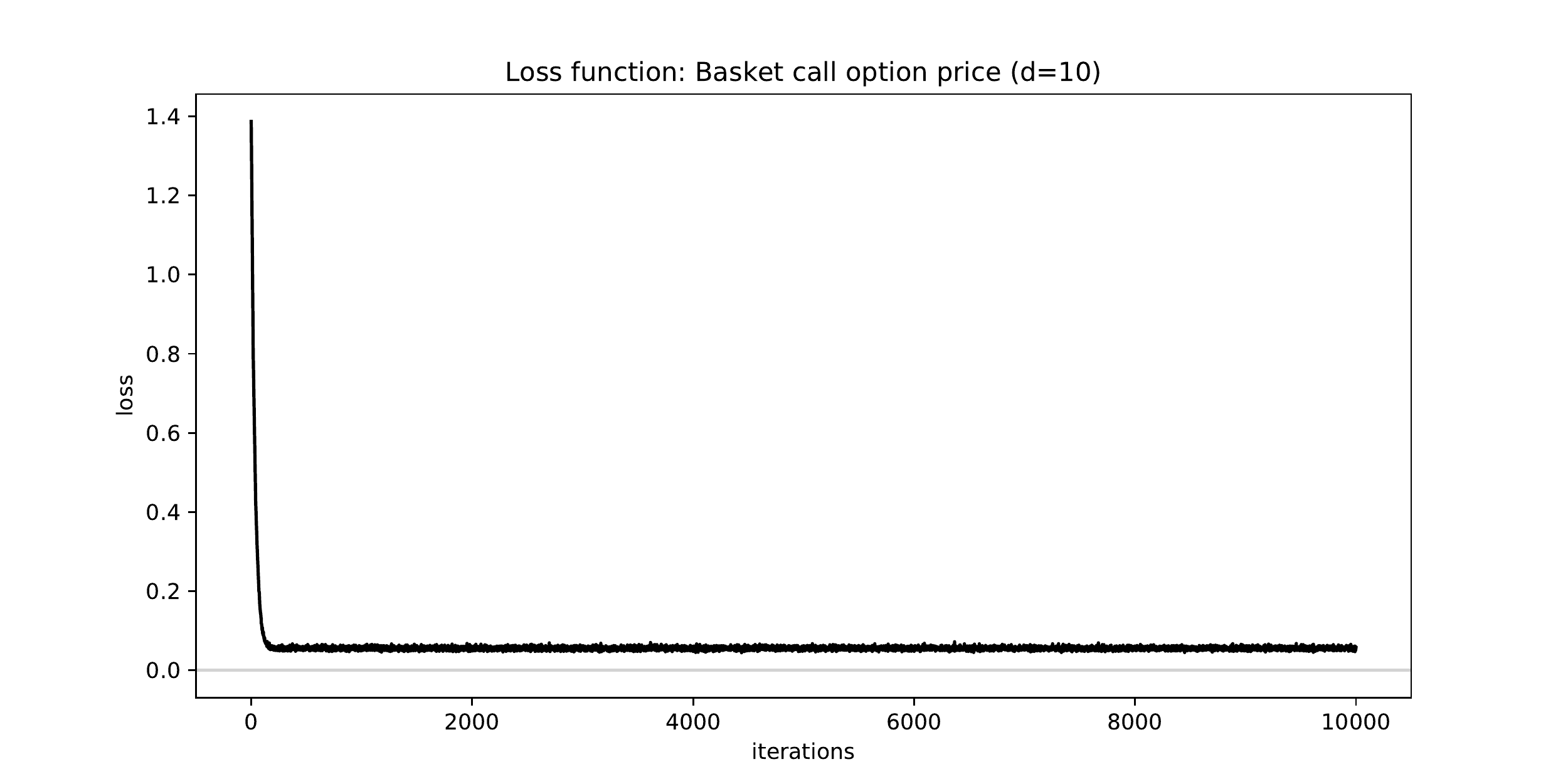}}%
	
	\caption{Estimates for the basket call option with 4 resp. 10 underlying assets. (A) and (B) show the estimation $\widehat {\mathcal U}_0(x)$ (black line) and MC-solutions (grey points). (B) and (C) show the loss function during the training procedure.}%
	\label{fig:basketcalloption}%
\end{figure}

\subsection{Semilinear PIDE}

In this section we study the semilinear \emph{stochastic linear regulator} problem for $T=1$. It is well known that there exists an analytical solution (see e.g. \citet{bib:oksendal2005stochastic}), which we use to verify the numerical estimation.

We train a neural network for every time point $t_i$ of a partition $0=t_0<t_1<\dots<t_N=T$ using batch size 10000 and 12000 gradient descent iterations with a learning rate 0.1 that decreases after 3000, 6000 and 9000 iterations by factor $10^{\,{-}1}$. We apply \texttt{sigmoid} activation function in the hidden layers and linear activation function in the output layer.

Denote by $c =(c_{1,t}, \dots, c_{d,t})_{t\ge 0}$  an adapted  control strategy and consider the controlled $d$-dimensional process $S^c$ with dynamics
\begin{align}\label{eq:state-process-regulator}
dS_{i,t}^c=c_{i,t} \, \ud t + \sigma_i \,  \ud W_{i,t} + \int_{\R}z \, 	\widetilde{J_i}(\ud z,\ud t), \; 1 \le i \le d,  \qquad S^c_0=x \in \R^d\, .	
\end{align}
Here $W = (W_1, \dots, W_d)$ is a $d$-dimensional standard Brownian motion,  $\theta \in \R^d$, $\rho \in \R^d $, $\sigma_1, \dots, \sigma_d$ are positive  constants, $T >0$   and $\widetilde J_1, \dots, \widetilde{J}_d$ is the compensated jump measure of $d$ independent compound   Poisson processes with Gamma($\alpha_i,\beta_i$)-distributed jumps. Denote by $\mathcal{A}$ the set of all adapted $d$-dimensional processes $c$ with $\bE \Big [ \int_0^T |c_s|^2  \,  \ud s\Big] < \infty $  and  consider the control problem
\begin{align*}
u(t,x)=\inf_{c \in \mathcal{A}} \bE \bigg [ \sum_{i=1}^d \Big( \int_{t}^{T} \big( (S_{i,s}^c)^2 + \theta_i c_{i,s}^2 \big) \,\ud s + \rho_i (S_{i,T}^c )^2 \Big) \,\Big |\, S_t^c=x \bigg ], \quad t \in [0,T], \, x \in \R^d.
\end{align*}
The interpretation of this problem is that the controller wants to drive  the process $S^c$ to zero using the control $c$; the instantaneous control cost  (for instance the energy consumed) is  measured by $\theta  c^2_t$. At maturity $T$ the controller incurs the terminal cost $\rho (S^c_T)^2$.

The Hamilton-Jacobi-Bellman (HJB) equation associated to this control problem  is
\begin{align*}
u_t(t,x)&+\frac{1}{2} \sum_{i=1}^{d} \sigma_i^2  u_{x_i x_i}(t,x) + \int_{\R^d} \Big [ u(t,x+z)-u(t,x)-\sum_{i=1}^d z_i u_{ x_i}(t,x) \Big ]  \,  \nu(\ud z)  \\ &+ \sum_{i=1}^d x_i^2 +\inf_c \Big \{\sum_{i=1}^d c_i u_{ x_i}(t,x) +\theta_i c_i^2  \Big \}=0,\quad (t,x) \in [0,T) \times \R^d,
\end{align*}
with terminal condition  $u(T,x)=\varphi(x):=\sum_{i=1}^d \rho_i x_i^2$.
The minimum  in the HJB equation is attained at
$c_{i}^*(t,x)=-\frac{1}{2 \theta_i} \frac{\partial u}{\partial x_i} (t,x)$, so that the value function solves the semilinear PIDE
\begin{align} \label{eq:HJB-stoch-reg}
u_t(t,x)&+\frac{1}{2} \sum_{i=1}^{d} \sigma_i^2  u_{x_i x_i}(t,x) - \sum_{i=1}^d   \int_{\R^d} z_i \, \nu(\ud z)   u_{x_i} (t,x)\\ &+\int_{\R^d}  [u(t,x+z)-u(t,x) ] \, \nu(\ud z)  + \sum_{i=1}^d x_i^2 - \sum_{i=1}^d \frac{1}{4 \theta_i} u_{ x_i}(t,x)^2=0\,.
\end{align}
It is well known that the HJB equation \eqref{eq:HJB-stoch-reg} can be solved analytically, see \cite{bib:oksendal2005stochastic}. For this we  make the Ansatz
$
u(t,x)=\sum_{i=1}^d a^i(t) x_i^2 +b(t).
$
Substitution into \eqref{eq:HJB-stoch-reg} gives an ODE system for $a(t)$ and $b(t)$ that can be solved explicitly. One obtains
\begin{align*}
a^i(t)&=\sqrt{\theta_i} \frac{1+\kappa_i e^{2t/\sqrt{\theta}}}{1-\kappa_i e^{2t/\sqrt{\theta}}}, \qquad \kappa_i:= \frac{\rho_i-\sqrt{\theta_i}}{\rho_i+\sqrt{\theta_i}}e^{-\frac{2T}{\sqrt{\theta_i}}} \\
b(t)&=\sum_{i=1}^d \sqrt{\theta_i} \Big(\sigma_i^2 + \int_{\R^d} z_i^2 \, \nu(\ud z)\Big)\Big((T-t)+\log \big((1-\kappa_i e^{2t})/(1-\kappa_i e^{2T})\big)\Big).
\end{align*}

To test the deep splitting method we  compute estimates for $u(t,x)$ for $x \in A: =  [-2,2] ^d$ for $d=4$ and $d=10$ with parameters
\begin{align*}
\begin{cases}
\sigma= (0.1,\dots,0.1)^T, \, \rho=  (1,\dots,1)^T \, , \theta=  (0.5,\dots,0.5)^T,\\
\lambda=(10,\dots,10)^T, \,  \alpha=(0.4,\dots,0.4)^T, \, \beta=(4,\dots,4)^T\, .	
\end{cases}
\end{align*}

For this we partition the time horizon into $N=10$ intervals $0=t_0<t_1<\dots<t_N=T$  and simulate the auxiliary process $\mathcal X$ for $t \in [t_{n-1},t_n]$
\begin{align} \label{eq:X}
\mathcal X_{i,t}=\xi_i  + \int_{t_{n-1}}^{t} \sigma_i \, \ud W_{i,s} + \int_{t_{n-1}}^{t}  \int_{\R} z \, \widetilde J_i(\ud z,\ud s) , \quad 1 \le i \le d,
\end{align}
where  $W$ and $\widetilde J$ are as in \eqref{eq:state-process-regulator}, and where $\xi$ is uniformly distributed on $A$.  The nonlinear term is finally given by
\begin{align*}
f(t,x,y,z)=\sum_{i=1}^d \Big(  \frac{1}{4\theta_i}z_i^2 -x_i^2 \Big).
\end{align*}
We linearize the PIDE and approximate $x \mapsto u(t_n,x)$ with a deep neural network $\widehat{\mathcal{U}}_{n}$ for $n=0,1,\dots,N-1$ by minimizing the loss function $L_n(\mathcal{U}_n)$ defined by \eqref{eq:loss_fun}.

The left column of Figure \ref{fig:stochreg} shows the estimated solutions and the analytic reference solutions $u(0,s)$ for $s=(x,1,\dots,1), x \in [-2,2]$ with $4$ resp. $10$ underlying state processes. In the right column is the corresponding loss function for the network at time $0$. Table \ref{tab:stochreg} reports the average estimate of $u(0,s_0)$ for $s_0=(1,\dots,1)^T$ and the standard deviation observed over 10 independent runs, the theoretical solutions and the relative error of the average estimate of $u(0,s_0)$. Moreover, as the true solution $u$ is known explicitly, we report the average error for 10 independent runs on the whole domain $[-2,2]^d$, which is (for each run) $\frac{1}{M}\sum_{i=1}^{M} \Big |\frac{\widehat{\mathcal{U}}_0(\xi_i)-u(0,\xi_i)}{u(0,\xi_i)} \Big |$ for iid $\xi_1,\dots,\xi_M \sim \text{Unif}([-2,2]^d)$  and $M=10000$.  For $d=4$ and $d=10$ one run takes approximately 3700 seconds resp. 5500 seconds.

\begin{table}[t]
	\begin{center}
		\begin{tabular}{l c c c c c }
			& Avg. value & Std. deviation & Theor. solution & Rel. err. (\%) & Rel. err. on $[-2,2]^d$ (\%) \\
			\hline
			\textbf{$d=4$} 	& 4.686643 &  0.052794	& 4.743960 & 1.21 & 1.97   \\
			\textbf{$d=10$} & 12.321216	 & 0.141565    & 11.859899	 & 3.89 & 2.62 \\
			\hline
		\end{tabular}
	\end{center}
	\vspace{2mm}
	\caption {Average estimates, standard deviations over 10 independent runs and theoretical solutions which are used as benchmark for the relative error for $u(0,s_0)$ for $s_0=(1,\dots,1)^T$, and the average error on the whole domain $[-2,2]^d$ is reported.}   \label{tab:stochreg}
\end{table}

\begin{figure} %
	\centering
	\subfloat[Estimate and theor. sol. $u(0,s)$ ($d=4$). ]{\includegraphics[scale=0.32]{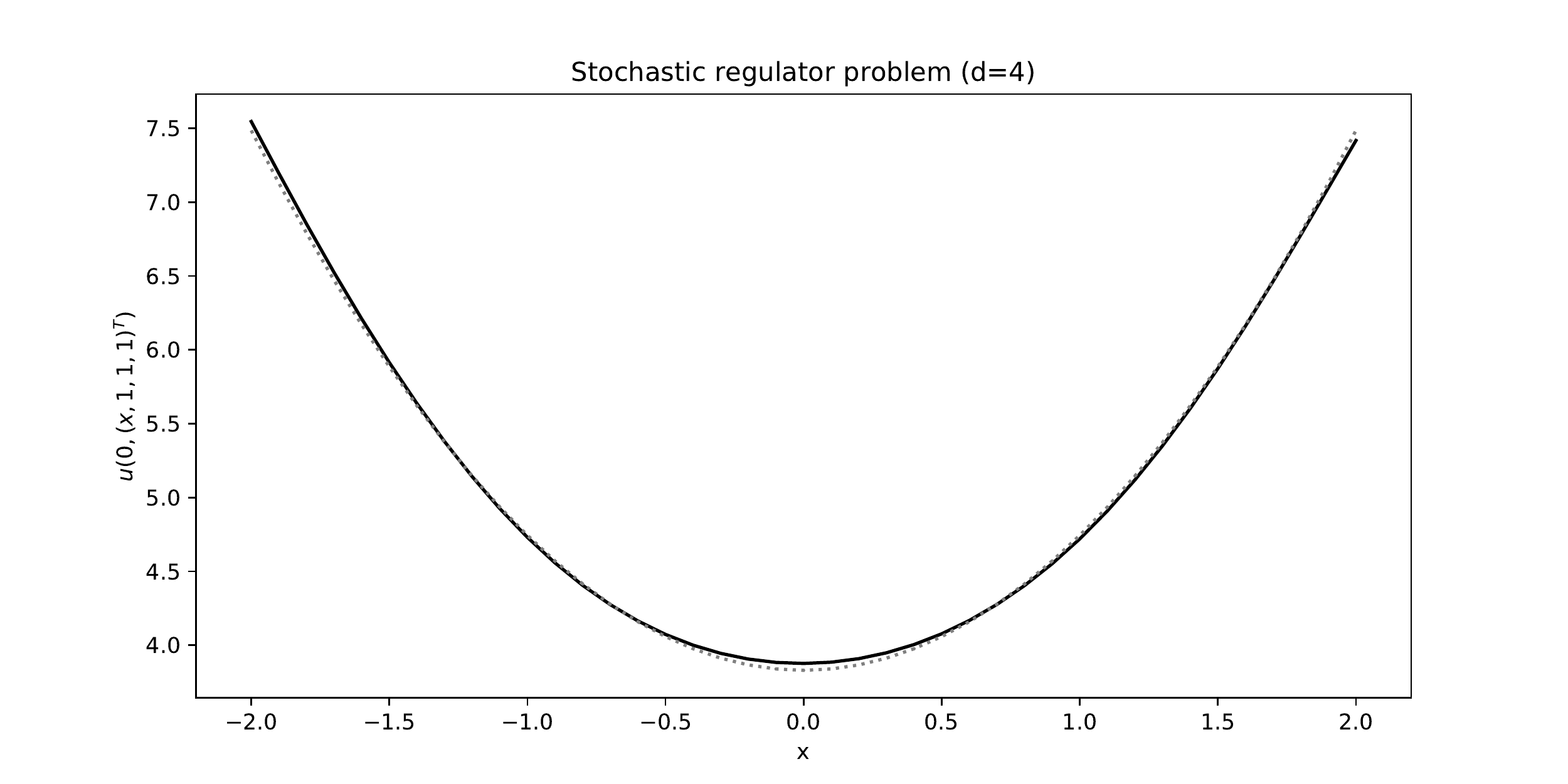}}%
	\qquad
	\subfloat[Loss function for $\widehat{ \mathcal U}_0$,  $d=4$.]{\includegraphics[scale=0.32]{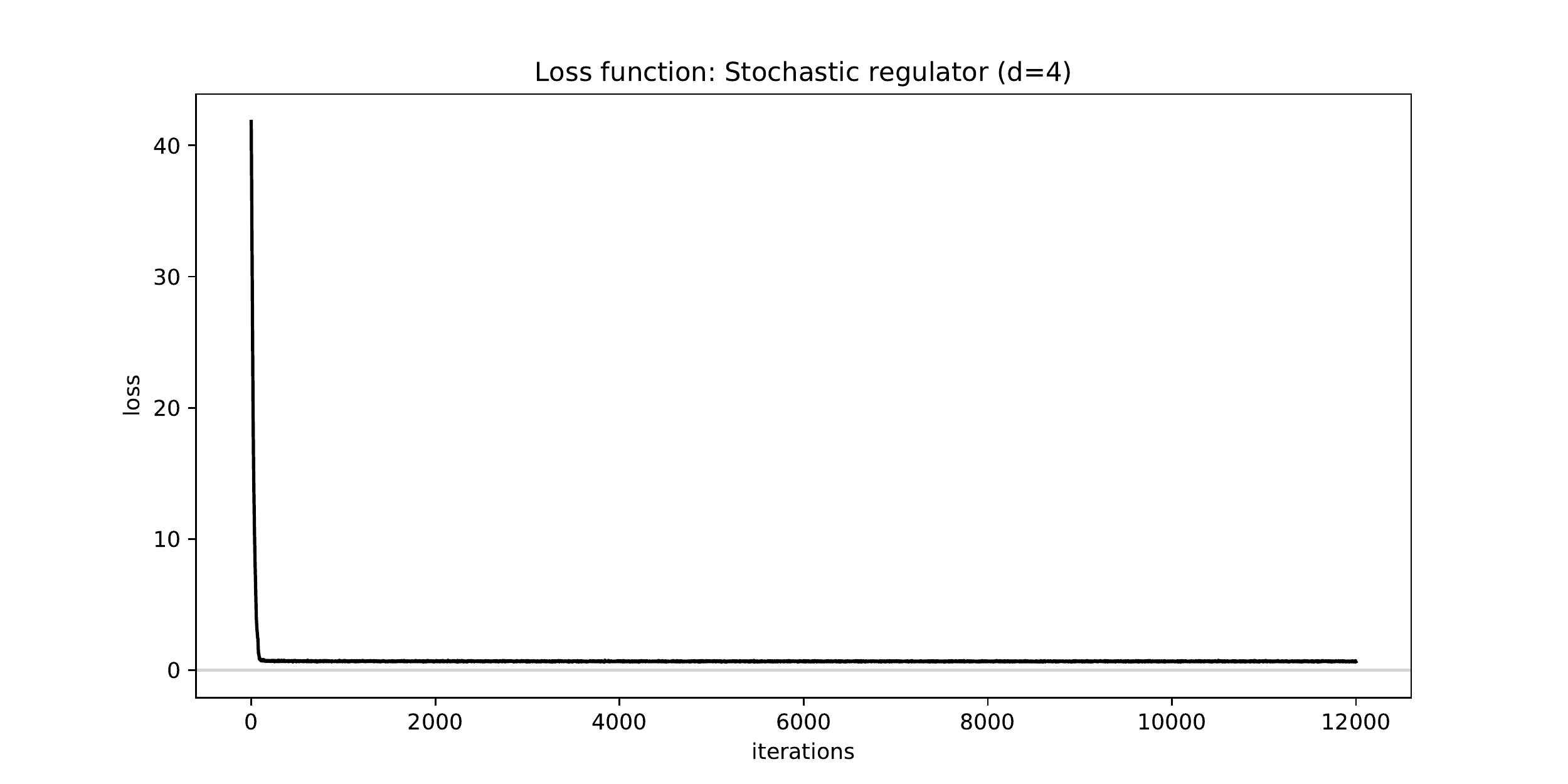}}%
	
	\subfloat[Estimate and theor. sol. $u(0,s)$ ($d=10$).  ]{\includegraphics[scale=0.32]{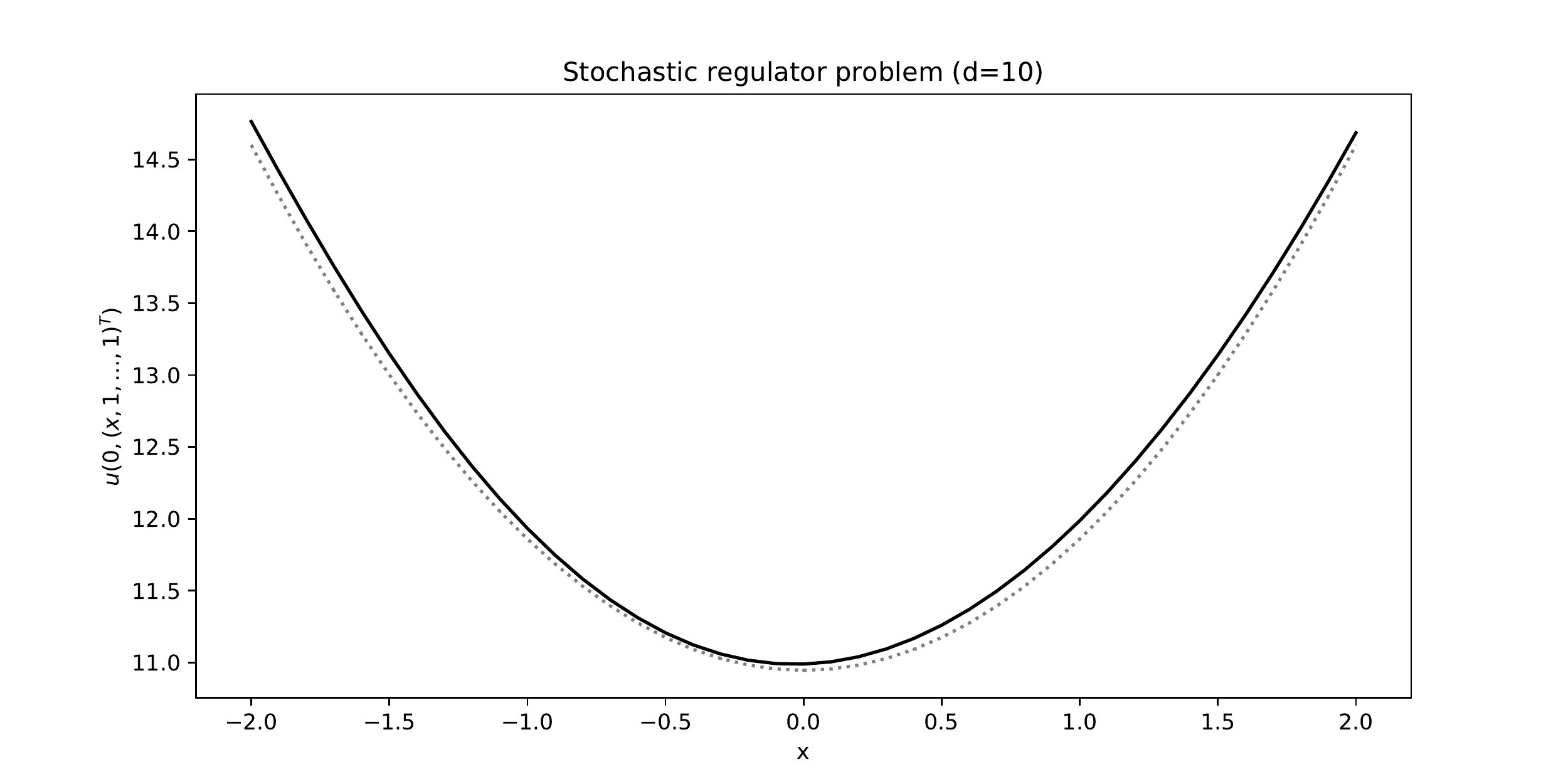}}%
	\qquad
	\subfloat[Loss function for $\widehat{ \mathcal U}_0$,  $d=10$.]{\includegraphics[scale=0.32]{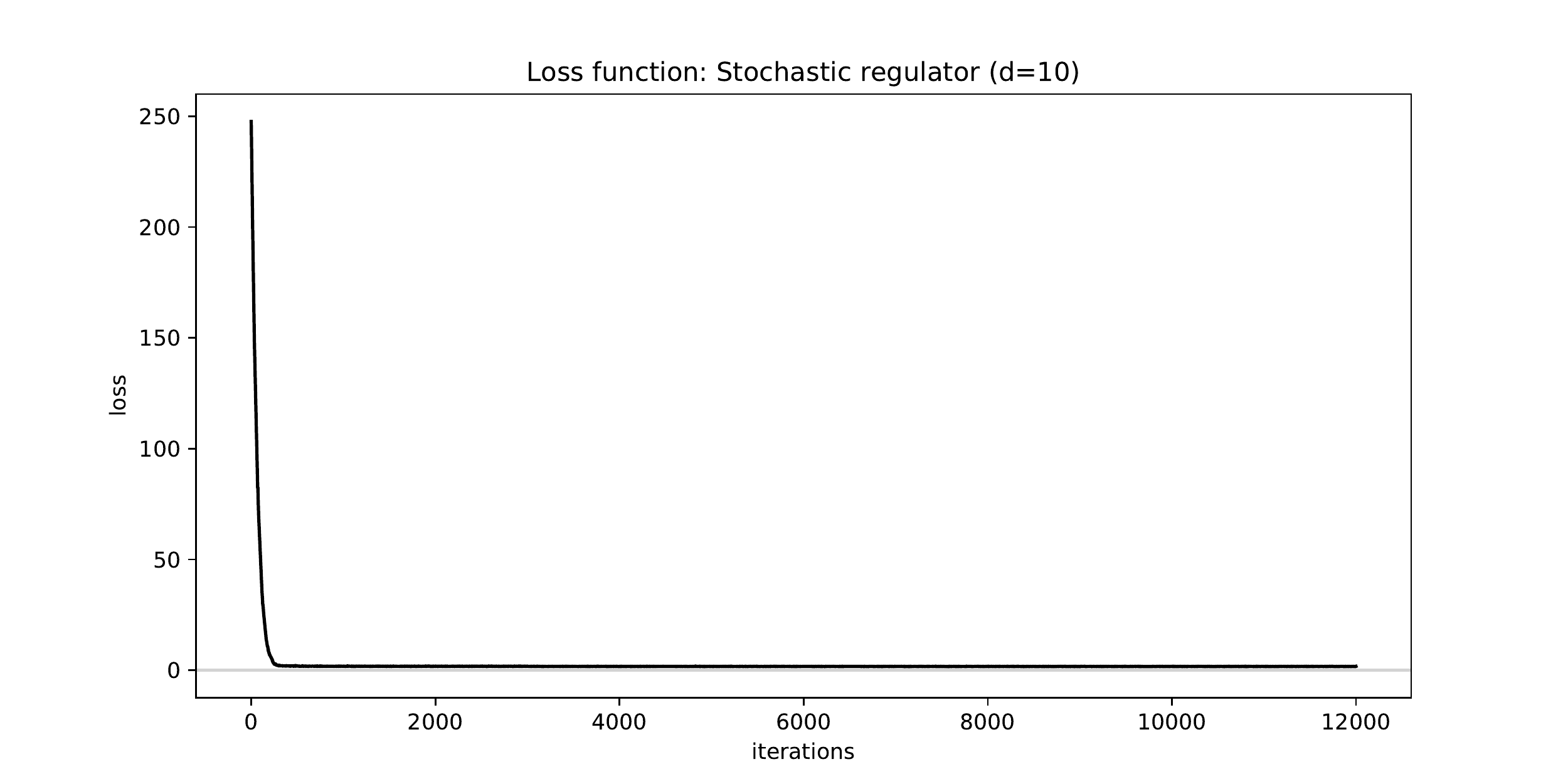}}%
	
	\caption{Estimates for the stochastic regulator problem with 4 resp. 10 underlying state processes. (A) and (C) shows the true solution (grey dotted line) and the deep neural network estimation (black solid line) for $u(0,s)$. (B) and (C) are the loss functions during the training procedure of $\widehat{\mathcal{U}}_0$. }%
	\label{fig:stochreg}%
\end{figure}

	\bibliographystyle{abbrvnat} 
\bibliography{pide_conv_bib}

\appendix

\section{Useful results} \label{sec:prel}

\subsection{Stochastic Calculus}

For details on stochastic calculus with L\'evy processes refer e.g. to \citet{applebaum2009levy} or \citet{delong2013backward}. We consider a probability space $(\Omega,\F,\P)$ with a filtration $\F=(\F_t)_{0 \le t \le T}$ that supports a Brownian motion $W$ and a Poisson random measure $J$. Recall some important results that we will frequently use in the following.

\begin{lemma}[Martingale Representation Theorem] \label{lemma:martingalerep}
	For any martingale $M$ there exists a $(Z,U) \in L^2_W(\R^d) \times L^2_J(\R)$ such that for $t \in [0,T]$,
	$$ M_t=M_0 + \int_{0}^{t} Z_s \ud W_s + \int_{0}^{t} U(s,z) \widetilde{J}(\ud s, \ud z) \, .$$
\end{lemma}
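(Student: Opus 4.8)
The plan is to reduce the statement to the classical \emph{predictable representation property} of the pair $(W,J)$ with respect to its own (augmented) filtration, which is the result genuinely underlying this lemma; see \citet{applebaum2009levy} or \citet{delong2013backward}. Before starting I would make explicit the hypothesis that is implicit in the conclusion, namely that $M$ is square integrable, i.e.\ $M_T\in L^2(\Omega,\F_T,\P)$, since otherwise one cannot expect the integrands to lie in $L^2_W(\R^d)\times L^2_J(\R)$. Granted a representation for the terminal value $M_T$, the representation for a general $M_t$ follows at once by taking $\F_t$-conditional expectations and using that both stochastic integrals are true martingales, so it suffices to represent $M_T$.

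First I would exhibit a rich family of random variables for which the representation is transparent. For bounded deterministic $h\colon[0,T]\to\R^d$ and bounded measurable $g\colon[0,T]\times\R^d\to\R$ with $\inf g>-1$, consider the Dol\'eans--Dade exponential
\[
Z^{h,g}_t=\doleans{\int_0^{t}h_s\,\ud W_s+\int_0^{t}\int_{\R^d}g(s,z)\,\widetilde J(\ud s,\ud z)},\qquad t\in[0,T].
\]
By It\^o's formula $Z^{h,g}$ solves the linear SDE $\ud Z^{h,g}_t=Z^{h,g}_{t-}\big(h_t\,\ud W_t+\int_{\R^d}g(t,z)\,\widetilde J(\ud t,\ud z)\big)$, so it is an $L^2$-martingale already of the required form, and the linear span $\mathcal{D}$ of $\{Z^{h,g}_T\}$ over all such $(h,g)$ sits inside $L^2(\Omega,\F_T,\P)$.

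The key step is to prove that $\mathcal{D}$ is \emph{dense} in $L^2(\Omega,\F_T,\P)$. I would argue by duality: if $\xi\in L^2(\Omega,\F_T,\P)$ satisfies $\bE[\xi\,Z^{h,g}_T]=0$ for every admissible $(h,g)$, then $\xi=0$. This is the heart of the matter; the standard route uses that the family $\{Z^{h,g}_T\}$ is stable under multiplication (up to a deterministic normalising factor, via a Girsanov-type change of measure) and that these exponentials encode the joint Fourier/Laplace functionals of $(W,J)$, so that a functional monotone class argument yields $\bE[\xi\,\I_A]=0$ for all $A\in\F_T$, whence $\xi=0$. Once density is in hand, the set $\H$ of those $\eta\in L^2(\Omega,\F_T,\P)$ admitting a representation $\eta=\bE[\eta]+\int_0^T Z_s\,\ud W_s+\int_0^T\int_{\R^d}U(s,z)\,\widetilde J(\ud s,\ud z)$ with $(Z,U)\in L^2_W(\R^d)\times L^2_J(\R)$ is a linear subspace of $L^2(\Omega,\F_T,\P)$ which is closed by the It\^o isometry (Lemma~\ref{lemma:isometry}) and contains the dense set $\mathcal{D}$, hence $\H=L^2(\Omega,\F_T,\P)$. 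Applying this to $\eta=M_T$ and then conditioning yields the claim, with uniqueness of $(Z,U)$ up to the usual null sets again immediate from the It\^o isometry.

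The main obstacle is precisely the density/totality step, i.e.\ showing that orthogonality to all exponentials $Z^{h,g}_T$ forces $\xi=0$. Since this argument is entirely standard in the L\'evy setting, in the paper we would simply cite \citet{applebaum2009levy} (or \citet{tang1994necessary}) rather than reproduce it, and then use the lemma as a black box, for instance in the proof of Lemma~\ref{lemma:tildeL}.
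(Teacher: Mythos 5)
The paper offers no proof of this lemma at all: it is stated in the appendix as a recalled classical fact with references to \citet{applebaum2009levy} and \citet{delong2013backward}, which is exactly what your final paragraph proposes to do. Your sketch of the underlying argument (Dol\'eans--Dade exponential martingales, density in $L^2(\Omega,\F_T,\P)$ by a duality/monotone-class step, closedness of the representable subspace via the It\^o isometry, then conditioning to pass from $M_T$ to $M_t$) is the standard and correct route, and you rightly flag the two hypotheses the paper leaves implicit, namely square integrability of $M$ and that $\bF$ must be the augmented filtration generated by $(W,J)$.
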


\begin{lemma}[Conditional It\^o Isometry] \label{lemma:isometry}
	For $V^1, V^2 \in L_J^2(\R)$ and $H, K \in L^2_W(\R^d)$ it holds that
	\begin{align*}
	\mathbb{E}_i \Big[ \int_{t}^{t_{i+1}}H_r \ud W_r \int_{t}^{t_{i+1}}K_r \ud W_r \Big]&=\mathbb{E}_i \Big[ \int_{t}^{t_{i+1}}H_r K_r \ud r \Big], \\
	\mathbb{E}_i \Big[ \int_{t}^{t_{i+1}} \int_{\R^d} V^1(s,z) \widetilde{J}(\ud s, \ud z) \int_{t}^{t_{i+1}} \int_{\R^d} V^2(s,z) \widetilde{J}(\ud s, \ud z) \Big]&=\mathbb{E}_i \Big[ \int_{t}^{t_{i+1}} \int_{\R^d} V^1(s,z) V^2(s,z) \nu(\ud z) \ud s \Big], \\
	\mathbb{E}_i \Big[ \int_{t}^{t_{i+1}} \int_{\R^d} V^1(s,z) \widetilde{J}(\ud s, \ud z) \int_{t}^{t_{i+1}}H_r \ud W_r\Big]& =0 \, .
	\end{align*}
\end{lemma}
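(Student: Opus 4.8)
The plan is to reduce each of the three identities to a statement about martingales on $[t,t_{i+1}]$ that vanish at the left endpoint, and then pass from the quadratic case ($H=K$, resp.\ $V^1=V^2$) to the bilinear one by polarization. Throughout I assume $t_i\le t$, as in every application of the lemma in the paper; the general case follows by splitting the integrals at $t\vee t_i$. The three identities are precisely the conditional It\^o isometry for Brownian integrals, the conditional It\^o isometry for compensated Poisson integrals, and the orthogonality of the two types of integral; all are classical (see \citet{applebaum2009levy} or \citet{delong2013backward}), so the write-up amounts to invoking these facts carefully.

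For the first identity I would set $N_s:=\int_t^s H_r\,\ud W_r$ for $s\in[t,t_{i+1}]$. Since $H\in L^2_W(\R^d)$, $N$ is a square-integrable martingale with $N_t=0$ and predictable quadratic variation $\langle N\rangle_s=\int_t^s|H_r|_2^2\,\ud r$, so $N_s^2-\langle N\rangle_s$ is a martingale vanishing at $s=t$. Evaluating at $s=t_{i+1}$ and conditioning first on $\F_t$ and then, by the tower property (using $\F_{t_i}\subseteq\F_t$), on $\F_{t_i}$ gives $\mathbb{E}_i[N_{t_{i+1}}^2]=\mathbb{E}_i[\int_t^{t_{i+1}}|H_r|_2^2\,\ud r]$. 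Applying this with $H$ replaced by $H+K$ and by $H-K$ and subtracting (dividing by $4$) yields the first identity, with $H_rK_r$ read as the inner product $H_r\cdot K_r$. The second identity follows verbatim with $\widetilde N_s:=\int_t^s\int_{\R^d}V^1(r,z)\,\widetilde J(\ud r,\ud z)$, which for $V^1\in L^2_J(\R)$ is a square-integrable martingale on $[t,t_{i+1}]$ with $\widetilde N_t=0$ and predictable quadratic variation $\int_t^s\int_{\R^d}|V^1(r,z)|^2\,\nu(\ud z)\,\ud r$; polarization in $(V^1,V^2)$ then gives the claim.

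For the third (orthogonality) identity, with $N$ and $\widetilde N$ as above, I would use that $N$ is continuous while $\widetilde N$ is a purely discontinuous martingale (equivalently, that $W$ and $J$ are independent), so that the quadratic covariation satisfies $[N,\widetilde N]\equiv 0$. Integration by parts then gives $N_s\widetilde N_s=\int_t^sN_{r-}\,\ud\widetilde N_r+\int_t^s\widetilde N_{r-}\,\ud N_r$, a local martingale vanishing at $s=t$; the $L^2$ bounds on $H$ and $V^1$ (via Burkholder--Davis--Gundy) upgrade it to a true martingale, whence $\mathbb{E}_i[N_{t_{i+1}}\widetilde N_{t_{i+1}}]=0$. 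I do not expect a genuine obstacle here; the only points needing care are checking that the restricted stochastic integrals are true (not merely local) $L^2$-martingales for the given filtration, and, should one insist on arbitrary $t$, bookkeeping the $\F_{t_i}$-measurable contribution of $[t,t_i]$ when $t<t_i$. Alternatively one could bypass the martingale argument and deduce the conditional identities from the unconditional It\^o isometry by testing against indicators of $\F_{t_i}$-sets, but the martingale route is the shorter one.
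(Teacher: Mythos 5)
The paper does not prove this lemma at all: it is stated in the appendix as a recalled classical fact, with the reader referred to \citet{applebaum2009levy} and \citet{delong2013backward}. Your argument --- polarization of the martingale identity $N_s^2-\langle N\rangle_s$ for each of the two stochastic integrals, plus $[N,\widetilde N]\equiv 0$ for the continuous/purely-discontinuous cross term --- is a correct and complete standard proof of exactly the facts the paper is recalling, and your two housekeeping remarks (reading $H_rK_r$ as the inner product $H_r\cdot K_r$, and taking $t\ge t_i$ so that the tower property applies, which is the only case the paper ever uses) are the right ones.
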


\subsection{Inequalities}
We will often use the following classical inequalities that we recall for the convenience of the reader.
\subsubsection*{{Young inequality.}} For all $(a,b) \in \R^2, \beta>0$,
$$ (1-\beta)a^2+(1-\frac{1}{\beta})b^2 \le (a+b)^2 \le (1+\beta)a^2+(1+\frac{1}{\beta})b^2$$
\subsubsection*{Discrete Gronwall Lemma.} Let $(u_n,v_n,h_n)_{n}$ be positive sequences satisfying for all $n \in \bN$
$$u_n \le (1+h_n)u_{n+1}+v_n \, , \qquad \forall n \in \bN \, . $$
Then, we have for all $N \in \bN^*$
$$\sup_{i \in \{0,1,\dots,N\}} u_i \le \exp \big(  \sum_{i=1}^{N-1} h_i \big)\big(u_N+\sum_{i=1}^{N-1}v_i\big)$$
In particular when $h_i=\beta \Delta t$ with $\beta>0, \Delta t = O(1/N)$, there exists a $C>0$ independent of $N$ such that
$$\sup_{i \in \{0,1,\dots,N\}} u_i \le C\big(u_N+\sum_{i=1}^{N-1}v_i\big)$$

\end{document}